\documentclass{birkjour}
\usepackage{hyperref}
\usepackage{enumitem}
\usepackage{nth}
\usepackage{mathtools}
\usepackage{tensor}
\usepackage{booktabs}
\usepackage{float}
\usepackage{amsmath, amssymb}
\usepackage{tikz}
\usepackage [english]{babel}
\usepackage [autostyle, english = american]{csquotes}
\MakeOuterQuote{"}
\hypersetup{
    colorlinks=true,
    linkcolor=blue,
    filecolor=blue,
    urlcolor=blue,
    citecolor=blue,
    pdftitle={Overleaf Example},
    pdfpagemode=FullScreen,
    }
\newcommand\restr[2]{{% we make the whole thing an ordinary symbol
  \left.\kern-\nulldelimiterspace % automatically resize the bar with \right
  #1 % the function
  \vphantom{\big|} % pretend it's a little taller at normal size
  \right|_{#2} % this is the delimiter
  }}

 \newtheorem{theorem}{Theorem}[section]
 \newtheorem{Corollary}[theorem]{Corollary}

 \theoremstyle{definition}
 \newtheorem{definition}[theorem]{Definition}
 \theoremstyle{remark}
 \newtheorem{remark}[theorem]{Remark}
 \newtheorem*{example}{Example}
 \numberwithin{equation}{section}

\newcommand*\xoverline[2][0.75]{%
    \sbox{\myboxA}{$\m@th#2$}%
    \setbox\myboxB\null% Phantom box
    \ht\myboxB=\ht\myboxA%
    \dp\myboxB=\dp\myboxA%
    \wd\myboxB=#1\wd\myboxA% Scale phantom
    \sbox\myboxB{$\m@th\overline{\copy\myboxB}$}%  Overlined phantom
    \setlength\mylenA{\the\wd\myboxA}%   calc width diff
    \addtolength\mylenA{-\the\wd\myboxB}%
    \ifdim\wd\myboxB<\wd\myboxA%
       \rlap{\hskip 0.5\mylenA\usebox\myboxB}{\usebox\myboxA}%
    \else
        \hskip -0.5\mylenA\rlap{\usebox\myboxA}{\hskip 0.5\mylenA\usebox\myboxB}%
    \fi}
\makeatother

\begin{document}

%-------------------------------------------------------------------------
% editorial commands: to be inserted by the editorial office
%
%\firstpage{1} \volume{228} \Copyrightyear{2004} \DOI{003-0001}
%
%
%\seriesextra{Just an add-on}
%\seriesextraline{This is the Concrete Title of this Book\br H.E. R and S.T.C. W, Eds.}
%
% for journals:
%
%\firstpage{1}
%\issuenumber{1}
%\Volumeandyear{1 (2004)}
%\Copyrightyear{2004}
%\DOI{003-xxxx-y}
%\Signet
%\commby{inhouse}
%\submitted{March 14, 2003}
%\received{March 16, 2000}
%\revised{June 1, 2000}
%\accepted{July 22, 2000}
%
%
%
%---------------------------------------------------------------------------
%Insert here the title, affiliations and abstract:
%

\title[  $\lambda$-Chromatic Polynomials and Polytope Geometry]
 { $\lambda$-Chromatic Polynomials and Polytope Geometry}

%----------Author 1
\author[A. Ali]{Annayat Ali}

\address{%
National Institute of Technology, Srinagar, \\
Jammu and Kashmir,\\
 190006, India.}

\email{enayatali.57@gmail.com}
%------------Author 2

\author[R. Raja]{Rameez Raja}

\address{%
National Institute of Technology, Srinagar, \\
Jammu and Kashmir,\\
 190006, India.}

\email{rameeznaqash@nitsri.ac.in}

%----------classification, keywords, date
\subjclass{}

\keywords{}

 \date{July 24, 2023}
%----------additions
%\dedicatory{To my boss}
%%% ----------------------------------------------------------------------

\begin{abstract}
In this paper, we investigate the notion of the \textit{$\lambda$-chromatic polynomial} of a graph, which enumerates the number of distinct $L(2,1)$-colorings using colors from a prescribed finite set. We prove that the $\lambda$-chromatic polynomial of a graph with $n$ vertices is a monic polynomial of degree $n$ and provide a combinatorial interpretation via lattice point enumeration within the framework of inside-out polytopes. Moreover, we compute the $\lambda$-chromatic polynomial of complete graphs $K_n$ using lattice path enumeration, and we develop a block-gap technique to derive the $\lambda$-chromatic polynomials for complete bipartite and multipartite graphs. Our approach unifies geometric, combinatorial, and algebraic methods to provide a systematic treatment of $\lambda$-colorings across various families of graphs.

\end{abstract}

%%% ----------------------------------------------------------------------
\maketitle
%%% ----------------------------------------------------------------------
\noindent
\textbf{Keywords:} Lambda chromatic polynomial, $L(2,1)$-colouring, Graph colouring, Inside-out polytopes, Ehrhart theory.\\

\noindent
\textbf{Mathematics Subject Classification (2020):} 05C15, 52B20, 05A15, 05C75, 52C35.

%----------classification, keywords, date

\section{Introduction}
Graph coloring represents one of the most extensively studied optimisation challenges in graph theory. For a given simple graph, the objective is to assign a unique colour to each vertex such that no two adjacent vertices share the same colour while minimising the total number of colours used. This problem widely applies to various real-world scenarios, including scheduling, timetabling, frequency assignment, wavelength routing, and many other practical domains (see, \cite{DA, GMO, GKO, QBM}).

In addition to the classical graph colouring problem, numerous generalizations have been proposed to accommodate more complex requirements and constraints in diverse applications (see, for example, \cite{DEW, SDW}). This paper investigates a novel extension of the graph coloring problem, offering greater flexibility in its applications by allowing the selection among multiple predefined strategies for vertex coloring.

Among the many variants of graph colouring, the $L(2,1)$-colouring problem, introduced by Griggs and Yeh \cite{GY}, is particularly important due to its practical relevance in channel assignment problems. The Frequency Assignment Problem (FAP) is a prominent application of the $L(2,1)$-colouring problem. It arises in networks of transmitter/receiver units (TRXs), where each unit must be assigned a frequency while adhering to specific constraints to avoid interference. The first constraint ensures that nearby TRXs do not receive frequencies that are too close, as this may cause signal interference. Interference occurs when two TRXs with identical or nearly identical frequencies are within a certain proximity to a receptor, such as a mobile device. The second constraint stems from the limited availability of the frequency spectrum, making it essential to minimise the total number of frequencies assigned.

This scenario can be modelled using an interference graph, where vertices represent TRXs, and edges exist between vertices if the corresponding TRXs are close enough to interfere. To maintain a compact frequency
bandwidth, their primary concern is to calculate the difference between the
highest and lowest frequencies assigned to the TRXs. The optimal
bandwidth in this context is called the $\lambda$-chromatic number, denoted
by $\lambda$. Hence, the problem can be stated equivalently as colouring the vertices
of a graph with non-negative integers while satisfying the L(2,1)-colouring
constraint.

The concept of the chromatic polynomial was first introduced by George Birkhoff in 1912 during his investigation of the Four Color Problem \cite{BG}. Although this work did not directly resolve the problem of whether every map can be coloured using only four colours such that no two adjacent regions share the same colour, it laid the foundation for a powerful combinatorial tool in graph theory. The chromatic polynomial encodes information about the number of proper colourings of a graph for any given number of colours. Over the years, it has proven to possess fascinating mathematical properties and has been widely studied for its connections to graph structure and enumeration. In this paper, we extend this idea to $L(2,1)$-colouring by introducing the concept of $\lambda$-chromatic polynomials, which count valid $L(2,1)$-colourings of graphs. We explore their behaviour, generalise their computation, and establish results that reveal deeper combinatorial insights. In \cite{ANN}, the authors already developed a frequency allocation algorithm for wireless networks that outperforms classical greedy heuristics by achieving superior computational efficiency and enhanced coverage optimality.

Lattice point enumeration in polytopes is a central topic in discrete geometry and combinatorics, with deep connections to number theory, optimization, and algebraic geometry. At its core, the problem concerns determining how many integer points lie within a given geometric shape, often revealing intricate combinatorial structures. This simple question naturally arises in various mathematical settings, such as counting integer solutions to inequalities, analyzing partition functions, and studying graph colorings. Over time, powerful techniques have been developed to study these counts systematically, leading to rich theories like Ehrhart’s polynomial framework, which describes the growth of lattice points in dilated polytopes. These methods not only provide elegant counting formulas but also offer geometric intuition for understanding complex combinatorial phenomena.

Beyond the combinatorial techniques of graph colouring and lattice path enumeration, geometric methods, particularly lattice point enumeration within polytopes, offer a powerful alternative perspective for analysing colouring problems. By representing valid colourings as integer lattice points inside geometric regions, we can leverage the rich theory of polytopes to model and systematically count the number of admissible colourings. This approach not only unifies combinatorial and geometric viewpoints but also provides new tools to explore colouring constraints through the lens of lattice enumeration and polyhedral geometry.

Lattice path enumeration plays a fundamental role in combinatorial analysis and has significant applications in graph theory. A lattice path consists of a sequence of moves on a grid (lattice), typically constrained to specific directions. By encoding the constraints of $L(2,1)$-labeling into lattice path models, we establish a connection between valid $L(2,1)$-colorings and lattice paths with certain restrictions.

Given the practical relevance of $L(2,1)$-colouring in frequency assignment problems, this paper is dedicated to the study of \textit{$\lambda$-chromatic polynomials}, which enumerate the number of valid $L(2,1)$-colourings of a graph $G$ for a finite colour set. Specifically, we make the following contributions:

\begin{enumerate}
    \item We formally introduce the $\lambda$-chromatic polynomial as a natural extension of the classical chromatic polynomial to the $L(2,1)$-colouring framework.
    \item We prove that the $\lambda$-chromatic function is a polynomial in the size of the colour set, and establish that it is a monic polynomial of degree equal to the order of the graph.
    \item We compute explicit forms of the $\lambda$-chromatic polynomials for complete graphs, complete bipartite graphs, and complete multipartite graphs using combinatorial and geometric methods.
\end{enumerate}

The remainder of the paper is structured as follows. Section~2 provides the essential definitions, background concepts, and relevant prior work that form the foundation for the developments in the later sections. In Section~3, we present the main results of the paper, including the introduction and analysis of the $\lambda$-chromatic polynomial for various classes of graphs.

\section{Preliminary  definitions and  lemmas}
In this section, we introduce the formal framework for $L(2,1)$-colouring of finite graphs and the $\lambda$-chromatic function, along with the necessary background on polytopes, lattice point enumeration, and Ehrhart theory.
\begin{definition}
Let $G$ be a graph. An $L(2,1)$-coloring of a graph $G$ is a function $f:V(G) \longrightarrow \mathbb{Z}_{\geq0}$ that satisfies the following conditions:
\begin{enumerate}[label=(\alph*)]
\item $|f(u)-f(v)| \geq 2$ whenever $u$ is adjacent to $v$;
\item $|f(u)-f(v)| \geq 1$ whenever $u$ is at distance two from $v$.
\end{enumerate}
The \textit{span} of a coloring is defined as the absolute difference between the maximum and minimum values assigned by \( f \). The \textit{\( \lambda \)-chromatic number} of a graph \( G \), denoted by \( \lambda(G) \), is the smallest non-negative integer such that there exists an \( L(2,1) \)-coloring of \( G \) using colors from the set \( \{0, 1, 2, \dots, \lambda(G)\} \). A coloring that achieves this minimum span is called a \textit{\( \lambda \)-coloring} of \( G \).
\end{definition}

\noindent
Note that $\lambda$-colouring of a graph \( G \) is not necessarily unique; specifically, the set of assigned colors may vary. There may exist two distinct \(\lambda \)-colourings that both minimize the span, as illustrated in  Figure \ref{F.111} below. In this example, the two valid image sets are \( \{0,0,2,2,4\} \) and \( \{0,1,2,3,4\} \), represented by red and blue labels, respectively.\\

\begin{figure}[h]
    \centering
    \begin{tikzpicture}[scale=1, every node/.style={inner sep=0pt}, node distance=2cm]
        % Define the five vertices
        \filldraw (-2,0) circle (3pt);  % leftmost
        \filldraw (-1,0) circle (3pt);
        \filldraw (0,0) circle (3pt);   % center
        \filldraw (1,0) circle (3pt);
        \filldraw (2,0) circle (3pt);   % rightmost

        % Draw the edges
        \draw (-2,0) -- (-1,0);
        \draw (-1,0) -- (0,0);
        \draw (0,0) -- (1,0);
        \draw (1,0) -- (2,0);

        % Add red labels (top)
        \node[red, above=5pt] at (-2,0) {0};
        \node[red, above=5pt] at (-1,0) {2};
        \node[red, above=5pt] at (0,0) {4};
        \node[red, above=5pt] at (1,0) {0};
        \node[red, above=5pt] at (2,0) {2};

        % Add blue labels (bottom)
        \node[blue, below=5pt] at (-2,0) {4};
        \node[blue, below=5pt] at (-1,0) {1};
        \node[blue, below=5pt] at (0,0) {3};
        \node[blue, below=5pt] at (1,0) {0};
        \node[blue, below=5pt] at (2,0) {2};
    \end{tikzpicture}
    \caption{Two distinct $\lambda$-colorings of $P_5$, shown with red and blue labels respectively.}
    \label{F.111}
\end{figure}

A proper colouring of a graph \( G \) involves assigning colours (positive integers) to the vertices of \( G \) such that no two adjacent vertices share the same colour. The chromatic number of \( G \), denoted by \( \chi(G) \), is the smallest number of colours required for a proper colouring of \( G \).

The chromatic polynomial of a graph, \( P(G, x) \), is a function that expresses the number of ways to properly colour the graph \( G \) using \( x \) colours.

Now, we define the \(\lambda\)-chromatic polynomial for a graph.

\begin{definition}[$\lambda$-Chromatic Polynomial]
For a graph \( G \) and a positive integer \( x \), the \(\lambda\)-chromatic Polynomial, \( \Lambda(G, x): \mathbb{Z}_{\geq0} \longrightarrow \mathbb{N} \), counts the number of possible \( L(2, 1) \)-colorings of the graph \( G \) using colors from the set \( \{0, 1, 2, \dots, x\} \).
\end{definition}

The function \( \Lambda(G, x) \) is well-defined, as for each positive integer \( x \), an \( L(2,1) \)-coloring prescribes a unique assignment of colors to the vertices of \( G \) satisfying two conditions: adjacent vertices receive colors that differ by at least 2, and vertices at distance two differ by at least 1. The number of such valid colorings for any given \( x \) is finite and well-determined for each graph \( G \), ensuring the function is computable in a consistent and meaningful way. The fact that \( \Lambda(G, x) \) is a polynomial will be formally established in the next section.

\noindent

 For any graph \( G \), the problem of finding \( \Lambda(G, x) \) when \( x < \lambda(G) \) is redundant, as in this case, \( \Lambda(G, x) \) is always zero. This is because \( \lambda(G) \) represents the minimum number of colours needed for a valid colouring, and if \( x \) is less than \( \lambda(G) \), it's impossible to have a valid colouring. Therefore, we focus only on the case where \( x \geq \lambda(G) \).
\begin{remark}
Let \( G \) be a graph consisting of \( k \) connected components \( G_1, G_2,$ 
\noindent $\ldots, G_k \). 
The \( \lambda \)-chromatic polynomial of \( G \) satisfies the following factorization property:
\[
\Lambda(G, x) = \prod_{i=1}^k \Lambda(G_i, x).
\]
Since the connected components of \( G \) are disjoint, each component \( G_i \) can be coloured independently using the same set of \( x \) available colours. Thus the total number of valid \( \lambda \)-colourings of \( G \) is obtained by taking the product of the individual colourings of each component.

A direct consequence of this result is the \( \lambda \)-chromatic polynomial of the null graph \( H \) on \( n \) vertices. Since \( H \) has no edges, each vertex can be assigned any of the \( x+1 \) available colours without restriction. This gives:
\[
\Lambda(H, x) = (x+1)^{n}.
\]
\end{remark}

The enumeration of lattice paths is a fundamental problem in combinatorial counting. While lattice paths might appear to be simple mathematical constructs for small-scale problems, their complexity and richness become evident as the scale of the problem increases. On one hand, lattice path enumeration is a powerful technique for analysing discrete structures in fields such as chemistry, physics, probability theory, and computer science. On the other hand, many intriguing properties of lattice paths remain unexplored, making this area a dynamic field of ongoing research.

\vspace{0.3cm}
\noindent
The simplest lattice path problem involves counting the number of paths in the Cartesian plane from the origin \( (0,0) \) to the point \( (m, n) \), using only unit steps to the east and north. The total number of such paths is given by $\binom{m+n}{m}$.

\begin{definition}
Let \( G \) be a graph of order \( n \) with a \( \lambda \)-colouring \( f \). Denote the vertex set of \( G \) as \( V(G) = \{ v_0, v_1, \dots, v_{n-1} \} \), where the vertices are ordered such that \( f(v_i) \leq f(v_j) \) for all \( i < j \), where $0 \leq i, j \leq n-1$. Given a positive integer \( m \geq \lambda(G) \), we define the \textit{Colouring Grid} \( \mathcal{C}(G, f, m) \) as an \( n \times (m+1 - \lambda(G)) \) grid, where the rows correspond to the vertices of \( G \), starting from the bottom. The entry in the cell \( (i,j) \) of \( \mathcal{C}(G, f, m) \) is given by \( f(v_i) + j \).
\end{definition}

\begin{example}
Consider the complete graph \( K_4 \) with the colour set \( \{0, 1, 2, \dots, 10\} \). Its \( \lambda \)-colouring is given by  \( f(v_i)=2i, 0 \leq i \leq 3 \), where \( v_i \in V(K_4) \). The associated colouring grid \( \mathcal{C}(K_4, f, 10) \) is given below.\\

\begin{figure}[h]
\centering
\begin{tikzpicture}[scale=0.8]
% Grid setup
\draw[step=1cm, gray, thin] (0,0) grid (5,4);

% Row Labels (Y-axis)
\node at (-0.5,0.5) {$v_1$};
\node at (-0.5,1.5) {$v_2$};
\node at (-0.5,2.5) {$v_3$};
\node at (-0.5,3.5) {$v_4$};

% Column 1 (0, 2, 4, 6)
\node at (0.5,0.5) {0};
\node at (0.5,1.5) {2};
\node at (0.5,2.5) {4};
\node at (0.5,3.5) {6};

% Column 2 (1, 3, 5, 7)
\node at (1.5,0.5) {1};
\node at (1.5,1.5) {3};
\node at (1.5,2.5) {5};
\node at (1.5,3.5) {7};

% Column 3 (2, 4, 6, 8)
\node at (2.5,0.5) {2};
\node at (2.5,1.5) {4};
\node at (2.5,2.5) {6};
\node at (2.5,3.5) {8};

% Column 4 (3, 5, 7, 9)
\node at (3.5,0.5) {3};
\node at (3.5,1.5) {5};
\node at (3.5,2.5) {7};
\node at (3.5,3.5) {9};

% Column 5 (4, 6, 8, 10)
\node at (4.5,0.5) {4};
\node at (4.5,1.5) {6};
\node at (4.5,2.5) {8};
\node at (4.5,3.5) {10};
\end{tikzpicture}
\vskip 0.2cm
\caption{ \( \mathcal{C}(K_4, f, 10) \).}
\label{fig:coloring-matrix}
\end{figure}
\end{example}
\vspace{-0.5cm}
\noindent
 Each column in the colouring grid of a graph $G$ represents a  $\lambda$-colouring of $G$ with an added constant, while each row corresponding to a vertex $v_i$ lists the possible colours for $v_i$ from the colour set $\{0, 1, 2, \dots, m\}$.\\
 \\
\noindent
The hockey-stick identity is a well-known combinatorial identity given by
\[
\sum_{k=0}^{m} \binom{N+k}{N} = \binom{N+m+1}{N+1}.
\]
This identity is named due to the visual shape it forms in Pascal’s triangle, resembling a hockey stick. It states that the sum of binomial coefficients along a diagonal in Pascal’s triangle results in another binomial coefficient located one row below and one column to the right of the last term in the sum. The identity can be proved using the combinatorial interpretation: the left-hand side counts the number of ways to choose $N$ elements from a set that grows in size from $N$ to $N+m$, while the right-hand side counts the same selection process directly. This identity is particularly useful in counting problems involving lattice paths and combinatorial sums.

We employ the concept of lattice path enumeration to compute the 
\( \lambda \)-chromatic polynomial of the complete graph \( K_n \). We specifically illustrate how 
lattice path techniques can be effectively used in this case. However, we also show in the 
subsequent section that this approach does not extend efficiently to other classes of graphs.

A \textit{convex} polytope is a bounded, non-empty subset of \( \mathbb{R}^d \) that can be expressed as the intersection of finitely many half-spaces, where each half-space is defined by a linear inequality of the form
\[
a_1x_1 + a_2x_2 + \dots + a_dx_d \leq c,
\]
for real constants \( a_1, \dots, a_d \) and \( c \), and where \( (x_1, x_2, \dots, x_d) \in \mathbb{R}^d \). While general polytopes may be non-convex, this combinatorial framework primarily concerns the convex case. Such polytopes are fundamental objects in combinatorial geometry and play a central role in lattice point enumeration. In the classical setting, Ehrhart theory provides a powerful framework for counting the number of lattice points inside dilations of convex polytopes. Specifically, if \( P \) is a rational polytope, the number of lattice points in its \( t \)-th dilation, denoted by \( E_P(t) \), is known to be a quasipolynomial function in \( t \). The leading coefficient of this quasipolynomial corresponds to the normalized volume of the polytope, and its constant term encodes topological information about the polytope.

A hyperplane in \( \mathbb{R}^d \) is a subset of the form \( H = \{ x \in \mathbb{R}^d : a_1x_1 + a_2x_2 + \cdots + a_d x_d = c \} \), where \( a_1, \dots, a_d \in \mathbb{R} \) are not all zero and \( c \in \mathbb{R} \).  A hyperplane arrangement is a finite collection of such hyperplanes, and their union partitions the space (or a convex subset such as a polytope) into finitely many connected components.

The concept of inside-out polytopes, introduced by Beck and Zaslavsky \cite{BZ}, extends Ehrhart’s theory to settings where the polytope is intersected by an arrangement of hyperplanes. An inside-out polytope is formally a pair $(P, \mathcal{H})$, where $P$ is a convex polytope and $\mathcal{H}$ is a finite set of hyperplanes that further dissect the polytope. In this framework, lattice point enumeration involves counting the integer points within $P$ that avoid the hyperplanes in $\mathcal{H}$. 

The authors in \cite{BZ} proved the following interesting result.

\begin{theorem}[Theorem 4.1, \cite{BZ}]
\label{TBZ}
If \( D \) is a full-dimensional discrete lattice and \( (P, \mathcal{H}) \) is a closed, full-dimensional, \( D \)-fractional inside-out polytope in \( \mathbb{R}^d \) such that \( \mathcal{H} \) does not contain the degenerate hyperplane, then \( E_{P, \mathcal{H}}(t) \) and \( E^{\circ}_{P^{\circ}, \mathcal{H}}(t) \) are quasipolynomials in \( t \), with period equal to a divisor of the denominator of \( (P, \mathcal{H}) \), with leading term \( c_d t^d \) where \( c_d = \operatorname{vol}_D P \), and with constant term \( E_{P, \mathcal{H}}(0) \) equal to the number of regions of \( (P, \mathcal{H}) \). Furthermore,
\[
E^{\circ}_{P^{\circ}, \mathcal{H}}(t) = (-1)^d E_{P, \mathcal{H}}(-t).
\]
\end{theorem}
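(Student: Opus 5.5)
The statement is quoted from \cite{BZ}, so the plan is to reduce it to classical Ehrhart theory for rational polytopes, applied region by region. Following \cite{BZ}, I take $E^{\circ}_{P^{\circ},\mathcal{H}}(t)$ to be the number of points of $D$ in $tP^{\circ}$ that lie on no hyperplane of $t\mathcal{H}$. I take $E_{P,\mathcal{H}}(t)$ to be the weighted count $\sum_{z} m(z)$ over $z\in D\cap tP$, where $m(z)$ is the number of closed regions of $(tP,t\mathcal{H})$ containing $z$.

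\textbf{Decomposition into regions.} Let $R_1,\dots,R_k$ be the regions of $(P,\mathcal{H})$, i.e. the connected components of $P^{\circ}\setminus\bigcup\mathcal{H}$. Each $R_i$ is an open convex set, being an intersection of $P^{\circ}$ with open half-spaces, and its closure $\overline{R_i}$ is a full-dimensional convex polytope. The exclusion of the degenerate hyperplane guarantees that every member of $\mathcal{H}$ is a genuine hyperplane, so these regions are well defined and full dimensional. Every vertex of $\overline{R_i}$ is the intersection of some faces of $P$ with some flats of $\mathcal{H}$. Hence, by the definition of the denominator of a $D$-fractional inside-out polytope, $\overline{R_i}$ is $D$-fractional with denominator dividing that of $(P,\mathcal{H})$. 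Dilation commutes with the dissection, so we get the two identities
\[
E^{\circ}_{P^{\circ},\mathcal{H}}(t)=\sum_{i=1}^k \#\bigl(D\cap tR_i\bigr), \qquad E_{P,\mathcal{H}}(t)=\sum_{i=1}^k \#\bigl(D\cap t\overline{R_i}\bigr).
\]

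\textbf{Ehrhart theory per region.} By Ehrhart's theorem for rational polytopes, each $\#(D\cap t\overline{R_i})$ is a quasipolynomial whose period divides the denominator of $\overline{R_i}$. Its leading term is $\operatorname{vol}_D(\overline{R_i})\,t^d$. Ehrhart--Macdonald reciprocity gives $\#(D\cap tR_i)=(-1)^d\,\#(D\cap (-t)\overline{R_i})$ at the level of quasipolynomials. Summing over $i$ then yields the following:
\begin{itemize}
\item both functions are quasipolynomials whose period divides the denominator of $(P,\mathcal{H})$;
\item the leading coefficient is $\sum_i\operatorname{vol}_D\overline{R_i}=\operatorname{vol}_D P$, since the hyperplanes have measure zero;
\item the reciprocity $E^{\circ}_{P^{\circ},\mathcal{H}}(t)=(-1)^dE_{P,\mathcal{H}}(-t)$ holds.
\end{itemize}

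\textbf{Constant term (main obstacle).} It remains to show that the constant term of each $E_{\overline{R_i}}$, i.e. its value at $t=0$, equals $1$. Summing over the $k$ regions then gives exactly the number of regions. For rational polytopes this needs care, because a quasipolynomial has several constituents and $t=0$ selects the constituent for the residue class $0$ modulo the period $p$.

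I would handle this by restricting to $t=ps$. The polytope $p\overline{R_i}$ is integral with respect to $D$, so $s\mapsto E_{\overline{R_i}}(ps)$ is a genuine Ehrhart polynomial. Its constant term is $1$, by the Euler characteristic of a nonempty convex polytope, or equivalently by reciprocity together with $\#(D\cap 0\cdot R_i^{\circ})=0$ for $s=0$ read appropriately. The value of the residue-$0$ constituent at $0$ is therefore $1$, which completes the argument.
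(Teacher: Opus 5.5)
The paper gives no proof of this statement; it is quoted from Beck and Zaslavsky. Your argument is essentially their proof, and it is correct: dissect $P$ into the closed regions $\overline{R_i}$, note that each is a $D$-fractional polytope whose denominator divides $\operatorname{den}(P,\mathcal{H})$, apply Ehrhart's theorem and Ehrhart--Macdonald reciprocity to each region, and sum.

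Your definitions, in which $\mathcal{H}$ is dilated together with $P$, are the right ones. The paper's paraphrase after the statement instead counts points off the undilated $\mathcal{H}$. That is a different function as soon as $\mathcal{H}$ contains affine hyperplanes such as $c_i-c_j=\pm 1$, and for it the identity $E^{\circ}_{P^{\circ},\mathcal{H}}(t)=\sum_i\#(D\cap tR_i)$, on which everything rests, would fail.

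Two small repairs are needed in the constant-term step.

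First, take $p=\operatorname{den}(\overline{R_i})$ rather than the minimal period. The period may be a proper divisor of the denominator, and then $p\overline{R_i}$ need not be a lattice polytope. With $p$ the denominator, $s\mapsto E_{\overline{R_i}}(ps)$ is the Ehrhart polynomial of a lattice polytope. It agrees with $s\mapsto f_0(ps)$ for all $s\ge 1$, where $f_0$ is the residue-$0$ constituent, so the two polynomials coincide and $f_0(0)=1$.

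Second, drop the alternative ``by reciprocity together with $\#(D\cap 0\cdot R_i^{\circ})=0$''. At $t=0$, reciprocity only says that the open-region polynomial takes the value $(-1)^dE_{\overline{R_i}}(0)$ there. For $[0,1]$ that polynomial is $t-1$, whose value at $0$ is $-1$, not $0$. So this route cannot produce the value $1$. The justification you need is the Euler-characteristic fact that a lattice polytope's Ehrhart polynomial has constant term $1$.
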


We clarify the definitions of the lattice point enumerators and the associated denominator of an inside-out polytope.

The function \( E_{P, \mathcal{H}}(t) \) denotes the number of lattice points in the dilated polytope \( tP \), each counted with multiplicity equal to the number of closed regions of the inside-out polytope \( (P, \mathcal{H}) \) that contain the point. This count includes points lying on the boundary of the polytope as well as on the hyperplanes in \( \mathcal{H} \). It corresponds to the closed inside-out Ehrhart quasipolynomial.

In contrast, the function \( E^{\circ}_{P^{\circ}, \mathcal{H}}(t) \) counts the number of lattice points in the open polytope \( tP^{\circ} \) that do not lie on any hyperplane in \( \mathcal{H} \). That is, it enumerates the lattice points strictly inside \( tP \) and entirely avoiding the hyperplane arrangement. This is referred to as the open inside-out Ehrhart quasipolynomial.

The denominator of the inside-out polytope \( (P, \mathcal{H}) \), denoted by \( \operatorname{den}(P, \mathcal{H}) \), is the smallest positive integer \( q \) such that all vertices of the dilated polytope \( qP \) and all intersections of hyperplanes in \( \mathcal{H} \) lie in the integer lattice \( \mathbb{Z}^d \). Formally, it is given by
\[
\operatorname{den}(P, \mathcal{H}) = \operatorname{lcm} \left( \operatorname{den}(P), \, \left\{ \operatorname{den}(H) \;\middle|\; H \in \mathcal{H} \right\} \right),
\]
where the denominator of the polytope \( P \) is defined as the smallest positive integer such that the vertices of \( qP \) lie in \( \mathbb{Z}^d \), and the denominator of a rational hyperplane \( H = \{ a_1x_1 + \dots + a_d x_d = b \} \) is the smallest positive integer \( q \) such that \( q a_i \in \mathbb{Z} \) for all \( i \), and \( q b \in \mathbb{Z} \).

According to the theory of inside-out Ehrhart quasipolynomials, the period of both \( E_{P, \mathcal{H}}(t) \) and \( E^{\circ}_{P^{\circ}, \mathcal{H}}(t) \) divides this denominator. In particular, when the denominator is one, that is, when both the polytope and the hyperplanes have integer coefficients, the enumerators are polynomials rather than quasipolynomials.

\section{Main Results}
Given the colouring grid \( \mathcal{C}(K_n, f, m) \) of the complete graph \( K_n \), we label the cells with coordinates \( (x, y) \), where \( 0 \leq x \leq m-2n+2 \) and \( 0 \leq y \leq n-1 \), starting from the bottom-left corner at \( (0, 0) \).

In the following theorem, we establish a bijective correspondence between valid \( L(2,1) \)-colourings of \( K_n \) with maximum colour at most \( m \) and North-East lattice paths in \( \mathcal{C}(K_n, f, m) \). Specifically, given such a colouring, we construct a North-East lattice path starting from a cell in the bottom row (\( y = 0 \)) and ending in the penultimate row (\( y = n-2 \)). Conversely, every North-East lattice path in \( \mathcal{C}(K_n, f, m) \), beginning in the bottom row and terminating in the penultimate row, corresponds to a valid \( L(2,1) \)-colouring of \( K_n \).

\begin{theorem}
\label{t1}
For the complete graph \( K_n \), there is an one-to-one correspondence between valid \( L(2,1) \)-colourings of \( K_n \) with maximum colour at most \( m \) and North-East lattice paths in \( \mathcal{C}(K_n, f, m) \) that begin in the bottom row and terminate in the penultimate row.
\end{theorem}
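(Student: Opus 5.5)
The plan is to reduce $L(2,1)$-colourings of $K_n$ to nondecreasing integer sequences, and then to read such sequences off as North-East paths in the grid. Since $K_n$ has diameter one, condition (b) of the definition is vacuous and condition (a) says only that all colours are pairwise at least $2$ apart. We fix the $\lambda$-colouring $f(v_i)=2i$, so $\lambda(K_n)=2n-2$, and the cell $(x,y)$ of $\mathcal{C}(K_n,f,m)$ carries the entry $2y+x$ with $0\le x\le m-2n+2$.

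First we normalise. The colours of a valid colouring are distinct, so we list them increasingly as $c_0<c_1<\dots<c_{n-1}\le m$, with $c_{y+1}-c_y\ge 2$. Following the ordering convention in the definition of the colouring grid, the vertex receiving $c_y$ is placed in row $y$. Any relabelling of the vertices is an extra factor of $n!$, which will be accounted for when counting, so the correspondence is with increasing colour sequences. Define the shifted values $x_y=c_y-2y$. The gap condition $c_{y+1}-c_y\ge2$ becomes $x_y\le x_{y+1}$, and $0\le c_0$, $c_{n-1}\le m$ become $0\le x_0$ and $x_{n-1}\le m-2n+2$. Hence valid colourings correspond bijectively to nondecreasing sequences
\[
0\le x_0\le x_1\le\dots\le x_{n-1}\le m-2n+2.
\]
Geometrically, $c_y$ is exactly the entry of cell $(x_y,y)$; that is, $c_y$ is the colour of $v_y$ read in column $x_y$ of the grid.

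Next we build the path. Start at cell $(x_0,0)$ in the bottom row. For $y=0,\dots,n-2$, move East inside row $y$ from column $x_y$ to column $x_{y+1}$, which is legal because $x_{y+1}\ge x_y$. Then, for $y<n-2$, take one North step to $(x_{y+1},y+1)$. In row $n-2$ we stop at column $x_{n-1}$ instead of stepping North. So the path starts in the bottom row, ends in the penultimate row, and stays in the grid since every $x_y\le m-2n+2$.

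For the converse, a NE path beginning in row $0$ and ending in row $n-2$ has exactly one North step between consecutive rows. Set $x_0$ to be its starting column, $x_{y}$ for $1\le y\le n-2$ to be the column of the North step entering row $y$, and $x_{n-1}$ to be its terminal column. Monotonicity of NE paths gives $x_0\le\dots\le x_{n-1}$, and $c_y=x_y+2y$ recovers the colouring. The two constructions are visibly inverse to each other.

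The main obstacle is the bookkeeping of the last vertex: the path has only $n-1$ rows available, yet it must encode $n$ values. I would handle this as above, by letting the terminal column in row $n-2$ encode $x_{n-1}$, so that the top row is read by an implicit final North step. I would also state precisely whether colourings are taken up to the labelling of the vertices. I plan to state the bijection for increasingly ordered colourings and note that labelled colourings are recovered by the factor $n!$.
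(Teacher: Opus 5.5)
Your proposal is correct and follows essentially the same route as the paper, which likewise extends the path by one North step into the top row and gives $v_i$ the entry of the first cell of the path in row $i$; in your notation this is $g(v_i)=2i+x_i$. Your explicit shift $x_y=c_y-2y$, your treatment of the last vertex via the terminal column, and your remark that the bijection is really with increasingly ordered colourings (with the factor $n!$ applied afterwards, as in Theorem~\ref{LKN}) make precise several steps that the paper's proof leaves informal.
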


\begin{proof}
We establish the one-to-one correspondence by demonstrating how to construct a valid \( L(2,1) \)-colouring from a North-East lattice path in the colouring grid and vice versa.

First, consider a North-East lattice path in the colouring grid \( \mathcal{C}(K_n, f, m) \), corresponding to the minimal \( L(2,1) \)-colouring \( f \) of the complete graph \( K_n \) with maximum colour \( m \). This path starts in some cell of the bottom row (\( y = 0 \)) and extends to a cell in the penultimate row (\( y = n-2 \)). To complete the construction, extend this path to the top row by moving one step upward. We now define a new colouring function \( g \) for \( K_n \) by assigning each vertex \( v_i \) the colour of the first cell of the extended North-East lattice path in the \( i \)th row:

\[
g(v_i) =
\begin{aligned}
&\text{colour of the leftmost cell in the } i\text{th row} \\
&\text{of the extended North-East lattice path}.
\end{aligned}
\]

By the structure of the colouring grid, the conditions for an \( L(2,1) \)-colouring are automatically satisfied. Since adjacent vertices correspond to consecutive rows in the lattice path, and each North-East move ensures a column difference of at least 2, adjacent colours differ by at least 2. Similarly, for vertices at distance 2, the colour difference is at least 1, following directly from the grid's structure. Thus, \( g \) defines a valid \( L(2,1) \)-colouring of \( K_n \).

Conversely, suppose we are given an \( L(2,1) \)-colouring \( g \) of \( K_n \). Arrange the assigned colours in increasing order. The corresponding North-East lattice path in \( \mathcal{C}(K_n, f, m) \) is constructed as follows:

Begin at the cell in the bottom row corresponding to the smallest assigned colour, which is located in column \( g(v_0) \). Move upward through the rows, transitioning from the cell corresponding to \( g(v_{i-1}) \) to the cell corresponding to \( g(v_i) \) for each \( 1 \leq i \leq n-2 \). If \( g(v_i) \) is in the same column as \( g(v_{i-1}) \), move directly upward; otherwise, if \( g(v_i) \) is in a strictly greater column, move one step to the right before stepping upward.

Repeating this process constructs a North-East lattice path that extends from the bottom row to the penultimate row.

Thus, we establish an one-to-one correspondence between  North-East lattice paths in the colouring grid and valid \( L(2,1) \)-colourings of \( K_n \).
\end{proof}
\begin{remark}
The forward implication of Theorem~\ref{t1} extends to arbitrary graphs. Specifically, given a colouring grid \( \mathcal{C}(G, f, m) \) associated with a graph \( G \), any North-East lattice path within this grid corresponds to a valid \( L(2,1) \)-colouring of \( G \). This follows directly from the construction outlined in Theorem~\ref{t1}, where the first occurrence of the path in each row determines the colour of the corresponding vertex except for the last row for which we move directly upward.
\end{remark}

The example below illustrates the technique used in Theorem~\ref{t1} and serves as a counterexample demonstrating why the converse does not hold for general graphs.
\begin{example}
Consider the complete bipartite graph \( K_{3,3} \) with partite sets \( V_1 = \{v_0, v_1, v_2\} \) and \( V_2 = \{v_3, v_4, v_5\} \). The \( \lambda \)-colouring function \( f: V(K_{3,3}) \to \mathbb{Z} \) is defined as follows:
\[
f(v_i) =
\begin{cases}
i, & \text{for } 0 \leq i \leq 2, \\
i+1, & \text{for } 3 \leq i \leq 5.
\end{cases}
\]
For \( m = 12 \), the corresponding colouring grid \( \mathcal{C}(K_{3,3}, f, 12) \) along with an extended North-East lattice path is given in Figure~\ref{fig:coloring-K33}
\begin{figure}[h]
\centering
\begin{tikzpicture}[scale=0.8]
\draw[step=1cm, gray, thin] (0,0) grid (7,6);

\node at (-0.5,0.5) {$v_0$};
\node at (-0.5,1.5) {$v_1$};
\node at (-0.5,2.5) {$v_2$};
\node at (-0.5,3.5) {$v_3$};
\node at (-0.5,4.5) {$v_4$};
\node at (-0.5,5.5) {$v_5$};

\node at (0.5,0.5) {0};
\node at (1.5,0.5) {1};
\node at (2.5,0.5) {2};
\node at (3.5,0.5) {3};
\node at (4.5,0.5) {4};
\node at (5.5,0.5) {5};
\node at (6.5,0.5) {6};

\node at (0.5,1.5) {1};
\node at (1.5,1.5) {2};
\node at (2.5,1.5) {3};
\node at (3.5,1.5) {4};
\node at (4.5,1.5) {5};
\node at (5.5,1.5) {6};
\node at (6.5,1.5) {7};

\node at (0.5,2.5) {2};
\node at (1.5,2.5) {3};
\node at (2.5,2.5) {4};
\node at (3.5,2.5) {5};
\node at (4.5,2.5) {6};
\node at (5.5,2.5) {7};
\node at (6.5,2.5) {8};

\node at (0.5,3.5) {4};
\node at (1.5,3.5) {5};
\node at (2.5,3.5) {6};
\node at (3.5,3.5) {7};
\node at (4.5,3.5) {8};
\node at (5.5,3.5) {9};
\node at (6.5,3.5) {10};

\node at (0.5,4.5) {5};
\node at (1.5,4.5) {6};
\node at (2.5,4.5) {7};
\node at (3.5,4.5) {8};
\node at (4.5,4.5) {9};
\node at (5.5,4.5) {10};
\node at (6.5,4.5) {11};

\node at (0.5,5.5) {6};
\node at (1.5,5.5) {7};
\node at (2.5,5.5) {8};
\node at (3.5,5.5) {9};
\node at (4.5,5.5) {10};
\node at (5.5,5.5) {11};
\node at (6.5,5.5) {12};

\draw[line width=1pt, red]
(1.3,0.2) -- (2.3,0.2) -- (3.3,0.2) -- (3.3,1.2) -- (4.3,1.2) -- (4.3,2.2)
(4.3,2.2) -- (5.3,2.2) -- (5.3,3.2) -- (6.3,3.2) -- (6.3,4.2) -- (6.3,5.2);

\fill[black] (1.3,0.2) circle (4pt);
\fill[black] (3.3,1.2) circle (4pt);
\fill[black] (4.3,2.2) circle (4pt);
\fill[black] (5.3,3.2) circle (4pt);
\fill[black] (6.3,4.2) circle (4pt);
\fill[black] (6.3,5.2) circle (4pt);
\end{tikzpicture}
\vskip 0.2cm
\caption{$\mathcal{C}(K_{3,3},f,12)$}
\label{fig:coloring-K33}
\end{figure}

In this grid, the first cell of the North-East lattice path in each row is marked with a dot. The corresponding \( L(2,1) \)-colouring \( g \) derived from this path is given by:
\[
g(v_0) = 1, \quad g(v_1) = 4, \quad g(v_2) = 6, \quad g(v_3) = 9, \quad g(v_4) = 11, \quad g(v_5) = 12.
\]
However, the \( L(2,1) \)-colouring given by
\[
g(v_0) = 2, \quad g(v_1) = 3, \quad g(v_2) = 9, \quad g(v_3) = 5, \quad g(v_4) = 6, \quad g(v_5) = 7
\]
does not correspond to a valid North-East lattice path in the colouring grid \( \mathcal{C}(K_{3,3}, f, 12) \) when arranged in non-decreasing order. This illustrates that not every arbitrary \( L(2,1) \)-colouring of \( K_{3,3} \) necessarily corresponds to a North-East lattice path in the grid.
\end{example}
Below, we determine the \( \lambda \)-chromatic polynomial \( \Lambda(K_n, x) \) for the complete graph \( K_n \).

\begin{theorem}
\label{LKN}
The \( \lambda \)-chromatic polynomial of the complete graph \( K_n \) is given by
\[
\Lambda(K_n, x) = \ (x - n + 2)(x - n + 1)(x-n) \cdots (x - 2n + 3).
\]
\end{theorem}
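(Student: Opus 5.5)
The plan is to count the colour sets first and then the assignments of those colours to the vertices. The lattice-path correspondence of Theorem~\ref{t1} will handle the first count, and a direct symmetry argument will handle the second.

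\textbf{The constraint.} Since every pair of vertices in $K_n$ is adjacent, an $L(2,1)$-colouring of $K_n$ with colours from $\{0,1,\dots,x\}$ is exactly an injective map $g$ whose values pairwise differ by at least $2$. Condition (b) is vacuous because $K_n$ has no pairs at distance two.

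\textbf{Step 1: reduce to sets.} Forgetting which vertex gets which colour, such a colouring determines an $n$-element set $\{a_0<a_1<\dots<a_{n-1}\}\subseteq\{0,\dots,x\}$ with $a_{i}-a_{i-1}\ge 2$. Conversely, any such set together with any bijection to $V(K_n)$ gives a valid colouring, because the constraint is symmetric in the vertices. So
\[
\Lambda(K_n,x)=n!\cdot N(n,x),
\]
where $N(n,x)$ is the number of such $2$-separated $n$-subsets.

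\textbf{Step 2: compute $N(n,x)$.} I expect this to be the main point requiring care. Theorem~\ref{t1} puts these sorted colourings in bijection with North-East paths in the colouring grid $\mathcal{C}(K_n,f,x)$ built from $f(v_i)=2i$. That grid has $n$ rows and $x-2n+3$ columns, since $\lambda(K_n)=2n-2$. Counting North-East paths from the bottom row to the penultimate row then gives a sum of binomial coefficients over the starting column. Collapsing it with the hockey-stick identity should give $\binom{x-n+2}{n}$.

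\textbf{Cross-check for Step 2.} Because the path bookkeeping in Theorem~\ref{t1} (extending to the top row, what a "step right" means) is easy to get off by one, I will verify the count independently. The substitution $b_i=a_i-i$ is a bijection between $2$-separated $n$-subsets of $\{0,\dots,x\}$ and arbitrary strictly increasing $n$-subsets of $\{0,\dots,x-n+1\}$. The latter set has $x-n+2$ elements, so $N(n,x)=\binom{x-n+2}{n}$. The inverse map is $a_i=b_i+i$, and the separation condition $a_i-a_{i-1}\ge 2$ is equivalent to $b_i>b_{i-1}$.

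\textbf{Step 3: conclude.} Multiplying,
\[
n!\binom{x-n+2}{n}=(x-n+2)(x-n+1)\cdots(x-2n+3),
\]
a falling factorial with exactly $n$ factors. This is the claimed formula.

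\textbf{Range of validity.} For $x\ge \lambda(K_n)=2n-2$ the count and the formula agree as shown. For $0\le x<2n-2$ the count is $0$, and the product also vanishes because one of its factors is $0$: since $x-2n+3\le 1$ and $x-n+2\ge 0$, the integer range of the factors contains $0$. So the identity holds for every nonnegative integer $x$. Hence $\Lambda(K_n,x)$ is this monic polynomial of degree $n$, consistent with the general claim announced in the introduction.
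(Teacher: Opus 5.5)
Your Steps 1--3 are correct and follow the paper's own route. The paper also counts sorted colourings as North-East paths in $\mathcal{C}(K_n,f,x)$: it sums $\binom{N+(j-i)}{N}$ over start and end columns and applies the hockey-stick identity twice to reach $\binom{x-n+2}{n}$, then multiplies by $n!$. Your cross-check via $b_i=a_i-i$ is a complete, self-contained proof that $N(n,x)=\binom{x-n+2}{n}$. It does not depend on the somewhat loosely stated path bijection, so it is the more reliable of your two arguments for Step~2.

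Your final paragraph on the range of validity, however, is wrong.
\begin{itemize}
\item From $x<2n-2$ you get $x-2n+3\le 0$. Your bound $\le 1$ is not enough, since it would not force $0$ into the range of the factors.
\item More seriously, the other inequality $x-n+2\ge 0$ holds only when $x\ge n-2$. For $n\ge 3$ and $0\le x\le n-3$, every factor is negative, so the product is nonzero while the count is $0$.
\item For example, with $n=3$ and $x=0$ the product is $(-1)(-2)(-3)=-6\neq 0=\Lambda(K_3,0)$.
\end{itemize}
Your own bijection shows why this happens. The target set $\{0,\dots,x-n+1\}$ is empty when $x<n-2$, and the polynomial $n!\binom{y}{n}=y(y-1)\cdots(y-n+1)$ equals $n!$ times the number of $n$-subsets of a $y$-element set only when $y\ge 0$.

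So the identity holds exactly for integers $x\ge n-2$. This includes $x\ge\lambda(K_n)=2n-2$, the range the paper explicitly restricts to in Section~2. Consequently, for $n\ge 3$, $\Lambda(K_n,\cdot)$ is not a polynomial function on all of $\mathbb{Z}_{\ge 0}$. Your closing claim that the identity \emph{holds for every nonnegative integer $x$} should be replaced by this restricted statement.
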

\begin{proof}
The $\lambda$-colouring of the complete graph \( K_n \) is given by the function
\[
f(v_i) = 2i, \quad \text{where } 0 \leq i \leq n-1,
\]
where the vertex set is \( V(K_n) = \{v_0, v_1, \dots, v_{n-1}\} \). This is the unique $\lambda$-colouring of \( K_n \).

The problem of counting distinct $L(2,1)$-colourings using the colour set \( \{0,1, \dots, m\} \) reduces to counting the number of north-east lattice paths in the colouring grid \( \mathcal{C}(K_n, f, m) \), as suggested by Theorem~\ref{t1}.

More precisely, we count the number of north-east lattice paths from a cell \( (a,0) \) to \( (b,n-2) \), where \( 0 \leq a \leq x - 2n + 2 \) and \( a \leq b \leq x - 2n + 2 \).

Substitute \( N = n - 2 \) and \( M = x - 2n + 2 \). It is well known that the number of north-east lattice paths from \( (0,0) \) to \( (a,b) \) is given by the binomial coefficient:
\[
\binom{a+b}{a}.
\]
First, we count the number of ways to travel from a fixed starting cell \( (i,0) \) to a fixed ending cell \( (j,N) \), where \( 0 \leq i \leq j \leq M \). This count is given by:
\[
\binom{N+(j-i)}{N}.
\]
Summing over all possible paths starting from any cell in the bottom row \( (i,0) \) and ending at any cell in the top row \( (j,N) \), we obtain:
\[
\sum\limits_{i=0}^{M} \sum\limits_{j=i}^{M} \binom{N+(j-i)}{N}.
\]
Now, setting \( k = j - i \), we rewrite the sum as:
\[
\sum\limits_{i=0}^{M} \sum\limits_{k=0}^{M-i} \binom{N+k}{N}.
\]
Applying the hockey-stick identity,
\[
\sum\limits_{k=0}^{m} \binom{N+k}{N} = \binom{N+m+1}{N+1},
\]
we simplify the inner sum to:
\[
\binom{N+(M-i)+1}{N+1}.
\]
Now, summing over all \( i \) from \( 0 \) to \( M \), we again apply the hockey-stick identity:
\[
\sum\limits_{i=0}^{M} \binom{N+(M-i)+1}{N+1} = \binom{N+M+2}{N+2}.
\]
Finally, substituting back \( M = x- 2n + 2 \) and \( N = n - 2 \), we obtain:
\[
\binom{x-n+2}{n}.
\]
Given an $L(2,1)$-colouring of \( K_n \), we can permute the colours, and the colouring remains valid. Since there are \( n! \) ways to permute the colours, we multiply by \( n! \) to account for these permutations. Therefore,

\[
\Lambda(K_n, x) = (x-n+2)(x-n+1)\cdots (x-2n+3).
\]

 This completes the proof.
\end{proof}
\begin{Corollary}
Let \( x, n \in \mathbb{N} \) with \( x \geq 2n - 2 \). Then the number of subsets \( S \subseteq \{0, 1, 2, \dots, x\} \) of size \( |S| = n \) which satisfy the pairwise spacing condition
\[
\min_{s_i,\, s_j \in S,\; i \neq j} |s_i - s_j| \geq 2,
\] is given by
\[
\binom{x - n + 2}{n}.
\]
\end{Corollary}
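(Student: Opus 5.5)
The plan is to count the sets $S$ directly through the standard "gap-removal" bijection, and then observe that this agrees with the computation in the proof of Theorem~\ref{LKN}.

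Write $S=\{s_1<s_2<\dots<s_n\}\subseteq\{0,1,\dots,x\}$. Since the elements are distinct and ordered, the spacing condition $\min_{i\neq j}|s_i-s_j|\ge 2$ is equivalent to $s_{i+1}-s_i\ge 2$ for $1\le i\le n-1$, because the minimum pairwise distance is attained between consecutive elements. Define
\[
t_i = s_i-(i-1), \qquad 1\le i\le n .
\]
Then $t_{i+1}-t_i=s_{i+1}-s_i-1\ge 1$, so $t_1<t_2<\dots<t_n$. We also have $t_1=s_1\ge 0$ and $t_n=s_n-(n-1)\le x-n+1$. Hence $T=\{t_1,\dots,t_n\}$ is an $n$-subset of $\{0,1,\dots,x-n+1\}$, a set with $x-n+2$ elements.

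Conversely, for any $n$-subset $t_1<\dots<t_n$ of $\{0,\dots,x-n+1\}$, put $s_i=t_i+(i-1)$. This gives $s_{i+1}-s_i\ge 2$, $s_1\ge 0$ and $s_n\le x$, so $S$ satisfies the spacing condition. The two maps are inverse to each other, so the count is $\binom{x-n+2}{n}$. The hypothesis $x\ge 2n-2$ ensures $x-n+2\ge n$, so the binomial coefficient is nonzero and the range is nondegenerate. The formula still holds outside this range, since it then correctly returns $0$.

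For consistency with the paper: such a set $S$ is exactly the unordered image of an $L(2,1)$-colouring of $K_n$ with colours in $\{0,\dots,x\}$. Since all pairs in $K_n$ are adjacent, the colours must be pairwise at distance at least $2$, and in particular distinct. So the corollary is equivalent to $\Lambda(K_n,x)=n!\binom{x-n+2}{n}$, which is the count in Theorem~\ref{LKN} before the factor $n!$ is introduced.

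No step here is a real obstacle. The only point needing care is the reduction from the pairwise condition to the consecutive-difference condition after sorting, which is immediate.
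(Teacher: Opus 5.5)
Your argument is correct, but it takes a different route from the paper. The paper never counts the sets $S$ directly. It identifies them with unlabeled $L(2,1)$-colourings of $K_n$ and reads the answer off Theorem~\ref{LKN}. That proof places a sorted colouring $c_0<\dots<c_{n-1}$ in the colouring grid, which in effect applies the shift $j_i=c_i-2i$ and produces a nondecreasing sequence in $\{0,\dots,x-2n+2\}$. It then uses the lattice-path bijection of Theorem~\ref{t1} and evaluates $\sum_{i}\sum_{j\ge i}\binom{N+(j-i)}{N}$ with two applications of the hockey-stick identity.

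Your shift $t_i=s_i-(i-1)$ lands on strictly increasing sequences instead, i.e.\ on plain $n$-subsets of a set of size $x-n+2$. So the binomial coefficient appears immediately, with no summation. This makes your argument self-contained: you do not rely on Theorem~\ref{t1}, whose bijection is only sketched in the paper. It also gives a cleaner logical order. You obtain $\Lambda(K_n,x)=n!\binom{x-n+2}{n}$ as a consequence rather than as input; the paper's corollary is really the intermediate count inside the proof of Theorem~\ref{LKN}, before the factor $n!$ is applied. The paper's route gains only coherence with the colouring-grid framework it is showcasing.

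One small caveat concerns your side remark that the formula ``correctly returns $0$'' outside the range $x\ge 2n-2$. This holds for $n-2\le x<2n-2$. For $x<n-2$ the upper index is negative, and under the polynomial convention (the relevant one when $\Lambda$ is treated as a polynomial) $\binom{x-n+2}{n}$ need not vanish; for example, $\binom{-1}{3}=-1$. This lies outside the corollary's hypothesis and does not affect your proof.
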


\begin{proof}
Let \( S \subseteq \{0, 1, 2, \dots, x\} \) be a subset of size \( |S| = n \) such that any two distinct elements of \( S \) differ by at least 2.

Observe that this condition is precisely the defining constraint for an \( L(2,1) \)-coloring of the complete graph \( K_n \).

Conversely, any \( L(2,1) \)-coloring of \( K_n \) using \( n \) distinct labels from \( \{0, 1, \dots, x\} \) that respect the coloring constraint naturally yields such a subset \( S \) as the set of colors used. Since the labels are unordered and distinct, this correspondence is bijective up to permutations of vertex labels.

Therefore, the number of such subsets \( S \) is equal to the number of (unlabeled) \( L(2,1) \)-colorings of \( K_n \) using \( n \) values in \( \{0, 1, \dots, x\} \). The result now follows directly from Theorem~\ref{LKN}.
\end{proof}
The following theorem establishes a formula for enumerating the number of ordered pairs of subsets that satisfy a minimum separation constraint between their elements. This enumeration will play a key role in the subsequent determination of the $\lambda$-chromatic polynomial of the complete bipartite graph.
\begin{theorem}
\label{LKMN}
Let \( x, m, n \in \mathbb{N} \) be fixed positive integers such that \( x \geq m + n \). Then the number of ordered pairs of subsets \( (A, B) \) with
\[
A, B \subseteq \{0, 1, 2, \dots, x\}, \quad |A| = m,\quad |B| = n,\quad \min_{a \in A,\; b \in B} |a - b| \geq 2,
\]
is given by
\[
T = \sum_{k=2}^{m+n} \binom{x - m - n + 2}{k} \sum_{a=1}^{k-1} \binom{k}{a} \binom{m - 1}{a - 1} \binom{n - 1}{k - a - 1}.
\]
\end{theorem}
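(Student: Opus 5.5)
We will count the pairs $(A,B)$ by sorting the union $A\cup B$ and recording both the pattern of labels and the gaps between consecutive elements. This is the block-gap idea mentioned in the abstract. The condition $\min|a-b|\ge 2$ forces $A\cap B=\emptyset$, so $A\cup B=\{y_1<y_2<\dots<y_{m+n}\}$ has exactly $m+n$ elements. Each $y_i$ carries a label in $\{A,B\}$, with $m$ labels $A$ and $n$ labels $B$. The constraints are local:
\[
y_{i+1}-y_i\ \ge\ 1+\varepsilon_i ,
\]
where $\varepsilon_i=1$ if $y_i,y_{i+1}$ carry different labels and $\varepsilon_i=0$ otherwise. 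We do not need to check non-consecutive pairs: if $a<b$ lie in different sets, some consecutive pair between them switches label, so $b-a\ge 2$.

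\textbf{Step 1 (fixed label word).} Fix a word $w$ in $A,B$ with $r$ label switches, i.e.\ $k=r+1$ maximal runs. Subtract the forced extra unit at each switch: $z_i=y_i-\#\{\text{switches before } i\}$. This is a bijection onto strictly increasing sequences in $\{0,\dots,x-r\}$, exactly as in the Corollary to Theorem~\ref{LKN}. Hence the number of valid placements is $\binom{x+1-r}{m+n}=\binom{x+2-k}{m+n}$.

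\textbf{Step 2 (counting words).} A word with $a$ runs of $A$ and $k-a$ runs of $B$ is determined by:
\begin{itemize}
\item a composition of $m$ into $a$ parts, counted by $\binom{m-1}{a-1}$;
\item a composition of $n$ into $k-a$ parts, counted by $\binom{n-1}{k-a-1}$;
\item the interleaving of the runs.
\end{itemize}
Since adjacent runs must differ in label, the interleaving requires $|2a-k|\le 1$, with a factor $2$ when $k=2a$. This gives
\[
T=\sum_{k\ge 2} W_k\binom{x+2-k}{m+n},\qquad W_k=\sum_{a}c_{k,a}\binom{m-1}{a-1}\binom{n-1}{k-a-1},
\]
with $c_{k,a}\in\{0,1,2\}$ encoding the alternation. (We also run a sanity check on small cases, e.g.\ $m=n=1$, $x=3$ gives $6$, and $m=2,n=1,x=3$ gives $2$; both agree with the stated formula.)

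\textbf{Step 3 (conversion to the stated form).} Apply Vandermonde's identity with $P=x-m-n+2$:
\[
\binom{x+2-k}{m+n}=\sum_j\binom{P}{j}\binom{m+n-k}{m+n-j}.
\]
Then swap the order of summation. It remains to prove the purely combinatorial identity
\[
\sum_{k}W_k\binom{m+n-k}{j-k}=\sum_{a=1}^{j-1}\binom{j}{a}\binom{m-1}{a-1}\binom{n-1}{j-a-1}
\]
for every $j$.

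This last identity is the main obstacle. It should follow from a bijective interpretation of the right-hand side: take $j$ cells, each labelled $A$ or $B$ arbitrarily ($\binom{j}{a}$ ways), and distribute the $m$ copies of $A$ and $n$ copies of $B$ into the nonempty cells ($\binom{m-1}{a-1}\binom{n-1}{j-a-1}$ ways). Merging consecutive equal-label cells yields a word with $k\le j$ runs. Conversely, refining a word with $k$ runs into $j$ cells corresponds to choosing $j-k$ of the $m+n-k$ non-switch adjacencies at which to cut, giving $\binom{m+n-k}{j-k}$.

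We expect this merge/refine bijection to be where the care is needed: the binomial $\binom{j}{a}$ must match labelled cells exactly, with no alternation restriction. If the bijection proves delicate, we would fall back on comparing bivariate generating functions in $m$ and $n$ for both sides.
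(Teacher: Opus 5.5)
Your proof is correct, but it takes a different and longer route than the paper.

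\textbf{The paper's route.} The paper never uses the sorted label word. It splits the selected positions directly into $k$ contiguous blocks separated by empty positions. Each such block must be monochromatic, and two blocks separated by a gap may carry the same label. Stars and bars on the $x+1-m-n$ empty positions, with the $k-1$ interior gaps nonempty, gives $\binom{x-m-n+2}{k}$. The labels are then chosen freely in $\binom{k}{a}$ ways and the block lengths are compositions, so the stated double sum appears in one step.

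\textbf{Your route.} You group configurations by their alternating word. This yields a different and rather compact identity, $T=\sum_k W_k\binom{x+2-k}{m+n}$, parallel to the complete-graph count. You then need Vandermonde together with the refinement identity to convert it to the stated form.

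That identity is not the obstacle you fear. Send a sequence of $j$ labelled nonempty cells to the pair (concatenated word, set of its $j-1$ cell boundaries). This is a bijection onto pairs whose cut set contains all $k-1$ switches. Over a word with $k$ runs there are exactly $\binom{m+n-k}{j-k}$ such cut sets. So your sketch is already a complete argument, and no generating-function fallback is needed. In fact your $j$ cells are exactly the paper's positional blocks of a configuration, which refine the runs of its word. Step~3 therefore just reconciles two ways of grouping the same objects.

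\textbf{What each route buys.} In the paper's route the block labels are unconstrained. This is why it extends verbatim to the multipartite count in Theorem~\ref{LCBG} via a multinomial coefficient. Your route would there require counting sequences of run labels over $n$ letters with no two consecutive letters equal (Smirnov words) with prescribed multiplicities, which has no simple product formula, plus the conversion step. Your route gives a second closed form for $T$ and explains why the two forms agree.
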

\begin{proof}
We aim to count the number of ordered pairs of disjoint subsets \( (A, B) \subseteq \{0,1,\dots,x\} \) with \( |A| = m \), \( |B| = n \), such that no element of \( A \) is adjacent to any element of \( B \), that is,
\[
\min_{a \in A,\; b \in B} |a - b| \geq 2.
\]
We represent the problem by assigning a label to each position in the set \( \{0, 1, \dots, x\} \), where each position is marked with the symbol \( \alpha \) to indicate membership in \( A \), the symbol \( \beta \) to indicate membership in \( B \), or the symbol \( \circ \) to denote that it is left empty. The labeling must satisfy three conditions. Exactly \( m \) positions are labeled \( \alpha \), and exactly \( n \) positions are labeled \( \beta \). No \( \alpha \)-labeled position is allowed to be adjacent to a \( \beta \)-labeled one, so patterns such as \( \alpha\beta \) or \( \beta\alpha \) are forbidden. On the other hand, consecutive labels of the same type, such as \( \alpha\alpha \) or \( \beta\beta \), are permitted.

To count such configurations, we use a block-and-gap strategy. Let \( p = (x+1) - (m + n) \) denote the number of empty positions. Suppose the selected positions (those labeled \( \alpha \) or \( \beta \)) form \( k \) contiguous blocks. In order to prevent adjacency between an \( \alpha \)-block and a \( \beta \)-block, there must be at least one empty position between each pair of blocks, which consumes at least \( k - 1 \) of the \( p \) available empty slots. The remaining \( p - (k - 1) \) empty positions can be distributed arbitrarily among the \( k+1 \) available locations: before the first block, between the blocks, and after the last block.

By the classical \emph{stars and bars} argument, the number of ways to distribute \( p - (k - 1) \) indistinguishable empty positions among \( k + 1 \) distinguishable gaps is given by
\[
\binom{(p - (k - 1)) + (k + 1) - 1}{k} = \binom{p + 1}{k}.
\]

Next, within these \( k \) blocks, suppose exactly \( a \) of them are labeled \( \alpha \), and the remaining \( b = k - a \) are labeled \( \beta \). There are \( \binom{k}{a} \) ways to choose which of the \( k \) blocks carry the \( \alpha \) label. The elements of \( A \) must then be partitioned into \( a \) blocks, each of positive length, and similarly for the \( B \) elements into \( b \) blocks. The number of such integer compositions is again given by the stars and bars method: there are \( \binom{m - 1}{a - 1} \) ways to split \( m \) \( \alpha \)-elements into \( a \) positive parts, and \( \binom{n - 1}{b - 1} \) ways to do the same for \( n \) \( \beta \)-elements into \( b \) blocks.

Thus, for fixed \( k \) and \( a \), the total number of such valid labelings is given by
\[
\binom{p+1}{k} \binom{k}{a} \binom{m-1}{a-1} \binom{n-1}{b-1}.
\]

Finally, summing over all valid values of \( k \geq 2 \) (since we require at least one block of each type) and over all \( a \) from \( 1 \) to \( k - 1 \), we obtain the total number of desired configurations as
\[
T = \sum_{k=2}^{m+n} \binom{x - m - n + 2}{k} \sum_{a=1}^{k-1} \binom{k}{a} \binom{m - 1}{a - 1} \binom{n - 1}{k - a - 1}.
\]
\end{proof}
To clarify the definitions of blocks and gaps used in the proof, we present a concrete example illustrating a valid labeling configuration.
\begin{example}
Let \( x = 15 \), \( m = 4 \), and \( n = 3 \). We want to label the positions \( \{0,1,\dots,15\} \) using exactly four \( \alpha \)-labels, three \( \beta \)-labels, and leave the rest empty (denoted by \( \circ \)), such that no \( \alpha \)-labeled position is adjacent to a \( \beta \)-labeled one.

Consider the following valid labeling:

\[
\circ\ \alpha\ \alpha\ \circ\ \circ\ \beta\ \beta\ \beta\ \circ\ \circ\ \alpha\ \alpha\ \circ\ \circ\ \circ\ \circ
\]

In this configuration:
\begin{itemize}
    \item The four \( \alpha \)-elements are grouped into two contiguous blocks.
    \item The three \( \beta \)-elements form a single block.
    \item There are at least one \( \circ \) between any \( \alpha \)-block and \( \beta \)-block, avoiding adjacent \( \alpha\beta \) or \( \beta\alpha \) patterns.
    \item Remaining \( \circ \)'s are distributed freely before, between, and after the blocks.
\end{itemize}
\begin{figure}[H]
\centering
\begin{tikzpicture}[scale=0.8]
% Draw the 16 positions
\foreach \i in {0,...,15} {
    \draw (\i,0) circle (5pt);
    \node at (\i,-0.7) {\tiny \i};
}

% Fill labels
\node at (1,0) {\textcolor{red}{$\alpha$}};
\node at (2,0) {\textcolor{red}{$\alpha$}};
\node at (5,0) {\textcolor{blue}{$\beta$}};
\node at (6,0) {\textcolor{blue}{$\beta$}};
\node at (7,0) {\textcolor{blue}{$\beta$}};
\node at (10,0) {\textcolor{red}{$\alpha$}};
\node at (11,0) {\textcolor{red}{$\alpha$}};

% Add legend
\node at (3.5,1.2) {\scriptsize \textcolor{red}{$\alpha$}-label};
\node at (8.5,1.2) {\scriptsize \textcolor{blue}{$\beta$}-label};
\node at (13.5,1.2) {\scriptsize $\circ$ = empty};
\end{tikzpicture}
\caption{An example of a valid labeling of positions \( 0 \) to \( 15 \), illustrating blocks of \( \alpha \) and \( \beta \) separated by empty positions.}
\label{fig:block-example}
\end{figure}\end{example}

\begin{Corollary}
\label{CKMN}
The $\lambda$-chromatic polynomial of the complete bipartite graph $K_{m,n}$ is given by 
\[ 
\Lambda(K_{m,n},x)= m!n!\sum_{k=2}^{m+n} \binom{x - m - n + 2}{k} \sum_{a=1}^{k-1} \binom{k}{a} \binom{m - 1}{a - 1} \binom{n - 1}{k - a - 1}.
\]
\end{Corollary}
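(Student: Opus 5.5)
The plan is to reduce the count of $L(2,1)$-colourings of $K_{m,n}$ to the count of ordered pairs of subsets in Theorem~\ref{LKMN}, and then account for the labelling of vertices within each part.

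Let the partite sets be $V_1=\{u_1,\dots,u_m\}$ and $V_2=\{w_1,\dots,w_n\}$. The first step is to record the distances in $K_{m,n}$ (for $m,n\ge 1$, excluding the trivial case $m=n=1$, which is $K_2$ and is handled directly). Every pair $u_i,w_j$ is adjacent. Two distinct vertices in the same part are at distance exactly $2$, via any vertex of the other part. Hence the diameter is at most $2$, and conditions (a) and (b) of the definition become exactly the following:
\begin{itemize}
\item $|f(u_i)-f(w_j)|\ge 2$ for all $i,j$;
\item $f$ is injective on $V_1$ and injective on $V_2$.
\end{itemize}
If one part is a single vertex, the injectivity condition on that part is vacuous but still true. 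No further constraints arise, so checking this equivalence carefully is the only conceptual point of the proof.

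Given a valid colouring $f$ with colours in $\{0,\dots,x\}$, set $A=f(V_1)$ and $B=f(V_2)$. By injectivity, $|A|=m$ and $|B|=n$. The cross condition gives $\min_{a\in A,b\in B}|a-b|\ge 2$, which in particular forces $A\cap B=\emptyset$. Conversely, take any such ordered pair $(A,B)$, a bijection $V_1\to A$ and a bijection $V_2\to B$. By the equivalence above, these data define a valid $L(2,1)$-colouring. The map
\[
f\longmapsto \bigl(A,B,\restr{f}{V_1},\restr{f}{V_2}\bigr)
\]
is clearly a bijection between valid colourings and such quadruples. Since there are $m!$ bijections $V_1\to A$ and $n!$ bijections $V_2\to B$, we get $\Lambda(K_{m,n},x)=m!\,n!\,T$, where $T$ is the count in Theorem~\ref{LKMN}. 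Substituting the formula for $T$ gives the stated expression.

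The remaining obstacle is the range of $x$, since Theorem~\ref{LKMN} assumes $x\ge m+n$. For $x<m+n$, a valid pair $(A,B)$ needs $m+n$ selected positions plus at least one separating empty position, so at least $m+n+1$ positions; this exceeds $x+1$, and hence the count is $0$. On the formula side, for every $k\ge 2$ we have $0\le x-m-n+2<k$, so each binomial coefficient $\binom{x-m-n+2}{k}$ vanishes (taking $\binom{r}{k}=0$ for $0\le r<k$). Thus the identity also holds in this range. It will be cleanest to state this explicitly, so that the corollary holds as an identity for all $x\ge 0$ and therefore as a polynomial identity in $x$.
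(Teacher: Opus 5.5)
Your core argument is the paper's: identify $L(2,1)$-colourings of $K_{m,n}$ with the ordered pairs $(A,B)$ counted in Theorem~\ref{LKMN}, and multiply by $m!\,n!$. You are more careful than the paper, which never says why $|f(V_1)|=m$. Your observation that two vertices in the same part are at distance two forces injectivity on each part. That is exactly what makes $f\mapsto(A,B,f|_{V_1},f|_{V_2})$ a bijection and the factor $m!\,n!$ exact. For $x\ge m+n$ your proof is complete.

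Your final paragraph, however, contains a false step. The inequality $0\le x-m-n+2$ holds only when $x\ge m+n-2$. For $0\le x<m+n-2$ the upper entry $r=x-m-n+2$ is negative. If $\binom{x-m-n+2}{k}$ is read as a polynomial in $x$, as it must be for the right-hand side to be a polynomial, then $\binom{r}{k}=(-1)^k\binom{k-r-1}{k}\neq 0$ for $r<0$.

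The sum need not vanish either. For $K_{1,2}=P_3$ the formula simplifies to $(x-1)^2(x-2)=x^3-4x^2+5x-2$, which is the paper's own polynomial for $P_3$. At $x=0$ it equals $-2$, whereas there is no valid colouring using the single colour $0$. With the alternative convention $\binom{r}{k}=0$ for $r<0$, the equality of functions survives, but then the right-hand side is not a polynomial on all of $x\ge 0$. So the inference from agreement for all $x\ge 0$ to a polynomial identity fails under either convention.

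What your argument actually shows is agreement for $x\ge m+n-2$; your handling of $m+n-2\le x<m+n$ is correct. Below that range the count and the formula can differ. None of this is needed: agreement at the infinitely many integers $x\ge m+n$ already determines the polynomial. Drop the claim about all $x\ge 0$, or restrict it to $x\ge m+n-2$.
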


\begin{proof}
Let \( x \in \mathbb{N} \), and consider a pair of disjoint subsets \( (A, B) \subseteq \{0, 1, 2, \dots, x\} \) with \( |A| = m \), \( |B| = n \), and satisfying the separation condition
\[
\min_{a \in A,\; b \in B} |a - b| \geq 2.
\]
Such a pair naturally defines an \( L(2,1) \)-coloring of the complete bipartite graph \( K_{m,n} \) by assigning the vertices of the partite set of size \( m \) the values from the set \( A \), and assigning the vertices of the other partite set the values from \( B \). 

Conversely, any \( L(2,1) \)-coloring of \( K_{m,n} \) using values from \( \{0,1,\dots,x\} \) gives rise to such a pair \( (A, B) \), where \( A \) and \( B \) are the sets of colors assigned to the two partite sets, respectively. Since colors can be permuted arbitrarily within each partite set without violating the coloring constraints, the number of such valid colorings (up to these permutations) corresponds exactly to the number of such subset pairs.

Therefore, by Theorem~\ref{LKMN}, the result follows.
\end{proof}

\begin{theorem}
\label{LCBG}
Let \( x \in \mathbb{N} \), and let \( m_1, m_2, \dots, m_n \in \mathbb{N} \) be fixed positive integers with total sum \( M = m_1 + \cdots + m_n \), and assume \( x \ge M + n - 2 \). Then the number of ordered \( n \)-tuples of disjoint subsets
\[
(A_1, A_2, \dots, A_n) \subseteq \{0, 1, \dots, x\}
\]
with \( |A_i| = m_i \) for all \( i \), and such that every element of \( A_i \) is at distance at least 2 from every element of \( A_j \) for \( i \ne j \), is given by
\[
T' = \sum_{k = n}^{M} \binom{x - M + 2}{k}
\sum_{\substack{a_1 + \cdots + a_n = k \\ a_i \ge 1}} 
\binom{k}{a_1, \dots, a_n}
\prod_{i = 1}^{n} \binom{m_i - 1}{a_i - 1}.
\]
\end{theorem}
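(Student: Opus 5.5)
The plan is to repeat the block-and-gap argument of Theorem~\ref{LKMN} with $n$ labels in place of two. I would encode an $n$-tuple $(A_1,\dots,A_n)$ as a word of length $x+1$ over the alphabet $\{\alpha_1,\dots,\alpha_n,\circ\}$. Position $t$ gets the symbol $\alpha_i$ if $t\in A_i$, and $\circ$ otherwise. This is well defined because the $A_i$ are disjoint, and indeed disjointness is forced by the distance condition. The constraints then become: exactly $m_i$ occurrences of $\alpha_i$, and no two adjacent positions carrying $\alpha_i$ and $\alpha_j$ with $i\neq j$. The correspondence between tuples and such words is clearly bijective, so it suffices to count the words.

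\textbf{Step 1 (decomposition into maximal runs).} Given an admissible word, I would decompose the non-$\circ$ positions into \emph{maximal} runs of consecutive non-empty positions, say $k$ of them.
\begin{itemize}
\item By the adjacency constraint, each run is monochromatic, i.e.\ labelled by a single $\alpha_i$.
\item By maximality, consecutive runs are separated by at least one $\circ$, whether or not they carry the same label.
\end{itemize}
Conversely, suppose we are given $k$ monochromatic runs in left-to-right order, with at least one $\circ$ between consecutive runs. The resulting word is admissible, and its maximal runs are exactly these $k$ runs. So the decomposition is recovered uniquely from the word, and there is no double counting.

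This is the point I would take most care with, since it is where two neighbouring blocks with the same label could otherwise be confused with a single block. Insisting that the runs be maximal, with at least one $\circ$ between \emph{every} pair of consecutive runs, settles it.

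\textbf{Step 2 (placing the gaps).} Let $p=(x+1)-M$ be the number of $\circ$'s. We must distribute them into $k+1$ gaps: the $k-1$ interior gaps each receive at least one $\circ$, and the two end gaps receive at least zero. By stars and bars, exactly as in Theorem~\ref{LKMN}, this gives $\binom{p+1}{k}=\binom{x-M+2}{k}$ ways. The hypothesis $x\ge M+n-2$ gives $p+1\ge n$, so the terms with $k\ge n$ are not automatically zero. This count does not depend on the run lengths, since those only enter through the total $M$.

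\textbf{Step 3 (labelling and sizing the runs).} Suppose label $\alpha_i$ is used on $a_i$ of the $k$ runs. Each $A_i$ is nonempty, so $a_i\ge1$ and $a_1+\cdots+a_n=k$; this forces $n\le k\le M$. For fixed $(a_1,\dots,a_n)$:
\begin{itemize}
\item the number of ways to assign labels to the ordered sequence of runs is the multinomial coefficient $\binom{k}{a_1,\dots,a_n}$;
\item the lengths of the $a_i$ runs labelled $\alpha_i$, read left to right, form a composition of $m_i$ into $a_i$ positive parts, giving $\binom{m_i-1}{a_i-1}$ choices, independently for each $i$.
\end{itemize}
The data (gap sizes, labelling sequence, compositions) determine the word uniquely, by the converse part of Step~1.

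Multiplying the counts from Steps 2 and 3 and summing over $k$ from $n$ to $M$ and over all $(a_1,\dots,a_n)$ with $a_i\ge1$ summing to $k$ yields $T'$. As a sanity check, the case $n=2$ recovers Theorem~\ref{LKMN}.
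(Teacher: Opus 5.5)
Your proposal is correct and follows the paper's block-and-gap argument: $\binom{x-M+2}{k}$ ways to place the gaps, the multinomial coefficient $\binom{k}{a_1,\dots,a_n}$ for labelling the blocks, and $\prod_i\binom{m_i-1}{a_i-1}$ for the block sizes. Your insistence that the blocks be maximal runs, with at least one $\circ$ between every pair of consecutive runs and not only between runs of distinct labels, is exactly what the paper's count $\binom{p+1}{k}$ tacitly assumes, and it makes the bijectivity cleaner than the paper's wording does.
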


\begin{proof}
The result follows as a natural generalization of Theorem~\ref{LKMN}, which handles the case \( n = 2 \). We again apply the block-and-gap strategy: each subset \( A_i \) of size \( m_i \) is partitioned into \( a_i \ge 1 \) contiguous blocks, where the total number of blocks is \( k = a_1 + \cdots + a_n \). The requirement that elements from different subsets be at distance at least two is enforced by placing at least one empty position between any two adjacent blocks of distinct types.

As before, the number of empty positions is \( p = x + 1 - M \). Distributing the
\( p - (k-1) \) free empty positions among \( k+1 \) gaps gives
\[
\binom{(p-(k-1))+k}{k} = \binom{p+1}{k} = \binom{x - M + 2}{k}.
\]
Once the total number of blocks \( k \) is fixed, the number of ways to assign block types to the \( n \) subsets is counted by the multinomial coefficient $$\binom{k}{a_1, \dots, a_n}.$$ 
The internal distribution of the \( m_i \) elements of each subset \( A_i \) across its \( a_i \) blocks is given by $$ \binom{m_i - 1}{a_i - 1},$$ using the standard stars-and-bars argument.

Summing over all compositions \( (a_1, \dots, a_n) \) of \( k \) with \( a_i \ge 1 \), and over all \( k \ge n \), yields the stated formula.
\end{proof}

\begin{Corollary}
The \( \lambda \)-chromatic polynomial of the complete \( n \)-partite graph 
\( K_{m_1,m_2,\dots,m_n} \), is given by
\begin{align*}
\Lambda(K_{m_1,m_2,\dots,m_n}, x)
= {} & \left( \prod_{i=1}^n m_i! \right)
\sum_{k = n}^{M} \binom{x - M + 1}{k} \\
& \times \sum_{\substack{a_1 + \cdots + a_n = k \\ a_i \ge 1}} 
\binom{k}{a_1, \dots, a_n}
\prod_{i = 1}^{n} \binom{m_i - 1}{a_i - 1},
\end{align*}
where \( M = m_1 + \cdots + m_n .\)
\end{Corollary}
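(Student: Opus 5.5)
The plan is to reduce the corollary to Theorem~\ref{LCBG} by a bijection between colourings and tuples of colour sets, in the same way Corollary~\ref{CKMN} was obtained from Theorem~\ref{LKMN}. The count comes out as $\binom{x-M+1}{k}$ when the $x$ in $\Lambda(K_{m_1,\dots,m_n},x)$ is read as the number of available colours, i.e.\ colours $\{0,1,\dots,x-1\}$. This is the ``$x$ colours'' phrasing used informally around the definition. Under the literal definition (palette $\{0,\dots,x\}$) the count is instead Theorem~\ref{LCBG} at $x$, which has $\binom{x-M+2}{k}$. The case $K_{1,1}=K_2$ separates the two readings: Theorem~\ref{LKN} gives $x(x-1)$ for the palette $\{0,\dots,x\}$, while the stated formula gives $(x-1)(x-2)$. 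I will therefore state this convention explicitly at the start of the proof, since the identity as printed holds exactly under it.

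\textbf{Step 1: structure of $K_{m_1,\dots,m_n}$.} Let $V_1,\dots,V_n$ be the parts, with $|V_i|=m_i$. Two vertices in different parts are adjacent. Two distinct vertices in the same part are at distance exactly $2$, since they have a common neighbour in any other part; this uses $n\ge 2$. So a map $g$ is an $L(2,1)$-colouring exactly when $g$ is injective on each $V_i$ and $|g(u)-g(v)|\ge 2$ whenever $u\in V_i$, $v\in V_j$ with $i\neq j$. Any two colours from different parts then differ by at least $2$, so in particular they are distinct.

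\textbf{Step 2: bijection with set tuples.} Send a colouring $g$ to $(A_1,\dots,A_n)$ with $A_i=g(V_i)$. By Step~1 we have $|A_i|=m_i$, the sets are pairwise disjoint, and elements of different $A_i$ are at distance at least $2$. Conversely, each such tuple arises from exactly $\prod_i m_i!$ colourings, one for each choice of bijections $V_i\to A_i$. Hence
\[
\Lambda(K_{m_1,\dots,m_n},x)=\Bigl(\prod_{i=1}^n m_i!\Bigr)\cdot\#\{(A_1,\dots,A_n)\}.
\]

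\textbf{Step 3: apply Theorem~\ref{LCBG}.} The palette has $x$ consecutive integers $\{0,\dots,x-1\}$, so Theorem~\ref{LCBG} applies with its parameter equal to $x-1$. The block-and-gap count depends only on the number of positions, here $x$. The number of empty positions is $p=x-M$, and distributing them gives $\binom{p+1}{k}=\binom{x-M+1}{k}$. This yields exactly the displayed inner double sum with the multinomial and $\prod_i\binom{m_i-1}{a_i-1}$ factors. Multiplying by $\prod_i m_i!$ gives the formula.

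\textbf{Step 4: polynomiality and small $x$.} When $x$ is below the hypothesis range of Theorem~\ref{LCBG}, both sides are zero: every $\binom{x-M+1}{k}$ with $k\ge n$ vanishes once $x-M+1<n$. So the identity holds for all $x$, which settles polynomiality.

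The main obstacle is the index bookkeeping in Step~3, namely matching the palette size to the binomial top entry. I will check it directly on $K_{1,1}$, which has $M=2$ and only the term $k=2$, $a_1=a_2=1$. The formula gives $2\binom{x-1}{2}=(x-1)(x-2)$, which equals the number of ordered pairs at distance at least $2$ in $\{0,\dots,x-1\}$. This confirms the convention.
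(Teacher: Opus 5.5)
Your Steps 1--3 are the paper's argument. The paper's proof is only the line \emph{direct consequence of Theorem~\ref{LCBG}, proceeding as in Corollary~\ref{CKMN}}: pass to the colour sets $A_i=g(V_i)$, multiply by $\prod_i m_i!$, and use the block-and-gap count. You carry this out correctly, including the point that two vertices of the same part are at distance $2$, which needs $n\ge 2$.

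Your off-by-one diagnosis is also right, and the paper's proof glosses over it. With the paper's palette $\{0,\dots,x\}$ the derivation gives $\binom{x-M+2}{k}$. That agrees with Corollary~\ref{CKMN} at $n=2$ and with Theorem~\ref{LKN} for $K_{1,\dots,1}=K_n$; the printed $\binom{x-M+1}{k}$ agrees with neither. It is cleaner to treat this as a misprint in the upper entry than to redefine $\Lambda$ for this one corollary. The palette $\{0,\dots,x-1\}$ is inconsistent with Theorem~\ref{LKN}, Corollary~\ref{CKMN} and the null-graph formula $(x+1)^n$. The two versions differ only by $x\mapsto x-1$.

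Step 4, however, is false as written. The binomial $\binom{x-M+1}{k}$ vanishes for $0\le x-M+1<k$. For $x\le M-2$ its upper entry is negative, and as a polynomial in $x$ it is nonzero. For $K_{2,2}=C_4$ in your convention the right-hand side is $4\bigl[2\binom{x-3}{2}+6\binom{x-3}{3}+6\binom{x-3}{4}\bigr]$. At $x=2$ this equals $8$, yet the palette $\{0,1\}$ admits no $L(2,1)$-colouring of $C_4$.

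So the identity is guaranteed only for $x\ge M-1$. It holds for every $x$ only if binomials with negative upper entry are declared $0$, and then the right side is no longer a polynomial. Consequently Step 4 cannot settle polynomiality. The count agrees with this polynomial for all large $x$, so the example shows the counting function is only eventually polynomial. Drop that claim and state the identity for $x\ge M-1$. That range already contains the one in which the paper's appeal to Theorem~\ref{LCBG} is valid.
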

\begin{proof}
The result is a direct consequence of Theorem~\ref{LCBG}. The proof proceeds in the same manner as Corollary~\ref{CKMN}. 
\end{proof}
The crux of the article lies in the following theorem, which establishes that $\Lambda(G,x)$ is invariably a polynomial function of $x$.
\begin{theorem}
\label{Poly}
Let \( G = (V, E) \) be a finite, undirected graph with \( |V| = n \). Then the \( \lambda \)-chromatic function \( \Lambda(G, x) \) is a monic polynomial in \( x \) of degree \( n \).
\end{theorem}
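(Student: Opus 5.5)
We plan to realize $\Lambda(G,x)$ as an inside-out Ehrhart count and then invoke Theorem~\ref{TBZ}. Throughout, $x$ is the largest admissible colour, so colourings are points of $\mathbb{Z}^n\cap[0,x]^n$, with coordinates indexed by the vertices.

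\textbf{Step 1: encode the constraints as excluded hyperplanes.} For an edge $uv$ the constraint is $|f(u)-f(v)|\ge 2$. This fails exactly when $f(u)-f(v)\in\{-1,0,1\}$, that is, when the point lies on one of the three hyperplanes $z_u-z_v=c$ with $c\in\{-1,0,1\}$. For a pair $u,v$ at distance two, the constraint fails exactly on the single hyperplane $z_u=z_v$. Let $\mathcal H$ be the resulting finite arrangement of integral hyperplanes. Then $\Lambda(G,x)$ is the number of points of $\{0,\dots,x\}^n$ avoiding $\bigcup\mathcal H$.

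\textbf{Step 2: show the count is a polynomial.} Because the hyperplanes $z_u-z_v=\pm1$ are not central, the family is not dilation-invariant. So we cannot apply Theorem~\ref{TBZ} directly to $t[0,1]^n$. Instead we avoid quasipolynomial subtleties with a direct argument.

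Partition the lattice points of $\{0,\dots,x\}^n$ according to the weak order type of the coordinates. Record which coordinates coincide and, for each pair of consecutive distinct values, whether the gap is exactly $1$ or at least $2$. For a fixed such ``pattern'', membership in the avoiding set is determined entirely by the pattern. This holds because every forbidden condition has the form $z_u-z_v\in\{-1,0,1\}$, and such a difference is in that set iff the merged gap data says so. Differences across two or more consecutive gaps are always at least $2$.

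For a fixed pattern with $r$ distinct values, $s$ gaps equal to $1$, and $r-1-s$ gaps at least $2$, we substitute new variables. The number of realizations in $\{0,\dots,x\}$ becomes the number of $r-s$ free increasing choices with slack, which is a binomial coefficient $\binom{x-(r-1-s)-s+1}{\,r-s\,}$ type expression, hence a polynomial in $x$ of degree $r-s$ for all $x$ large enough. We need to check it agrees with the polynomial for all $x\ge 0$, using the convention that negative-top binomials vanish appropriately. Summing over the finitely many admissible patterns shows $\Lambda(G,x)$ is a polynomial for $x$ sufficiently large.

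Alternatively, we can phrase the same conclusion as follows. The complement region is a union of relatively open cells of the arrangement intersected with $[0,x]^n$, each of which contributes a polynomial by Ehrhart theory for integral polytopes. This is the inside-out viewpoint of Theorem~\ref{TBZ}, with denominator $1$, so there is no quasi-periodicity.

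\textbf{Step 3: identify the leading term.} The only patterns contributing degree $n$ are those with $r=n$ and $s=0$: all coordinates distinct with all gaps at least $2$. This is fine for every graph, since such a point satisfies every constraint. There are $n!$ such patterns (orderings), and each contributes $\binom{x-n+2}{n}$, which has leading coefficient $1/n!$. All other patterns have degree at most $n-1$. Hence the leading coefficient is $n!\cdot\frac1{n!}=1$, and the polynomial is monic of degree $n$. As a consistency check, this matches Theorem~\ref{LKN} and the count $(x+1)^n$ for the null graph.

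\textbf{Main obstacle.} The delicate point is making Step~2 rigorous: showing that each pattern's count is exactly a polynomial, rather than merely for large $x$. If that fails for small $x$, the claim must be read as polynomial agreement for $x\ge\lambda(G)$ (or for large $x$), which is the natural sense here. I would first try to handle this with the stars-and-bars count, verifying that $\binom{x+1-(\text{forced length})}{r-s}$ vanishes correctly whenever $x$ is below the forced length.
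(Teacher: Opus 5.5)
Your route is genuinely different from the paper's, and on the decisive point it is the sounder one. The paper asserts $\Lambda(G,x)=E^{\circ}_{P^{\circ},\mathcal H}(x+2)$ for $P=[0,1]^n$ and the arrangement of hyperplanes $c_i-c_j=r$. It then reads off polynomiality and the leading coefficient $\operatorname{vol}P=1$ from Theorem~\ref{TBZ}.

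The objection you raise at the start of Step~2 is exactly what breaks this. Beck and Zaslavsky count points of $t^{-1}\mathbb{Z}^n$ in $P^{\circ}\setminus\bigcup\mathcal H$, so the hyperplanes are dilated together with $P$. Thus $c_i-c_j=\pm1$ effectively becomes $c_i-c_j=\pm t$, which never meets $(0,t)^n$. For $K_3$ this enumerator is $(t-1)(t-2)(t-3)$, i.e.\ $(x+1)x(x-1)$ at $t=x+2$, not $(x-1)(x-2)(x-3)$.

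Your decomposition by weak order type with gap flags avoids this altogether. Admissibility depends only on the pattern. A pattern with $r$ values and $s$ unit gaps contributes $\binom{x-r+2}{r-s}$ for $x\ge r-2$. Only the $n!$ patterns with $r=n$, $s=0$ reach degree $n$, so the leading coefficient is $n!\cdot\frac{1}{n!}=1$. The paper's framework, if it applied, would also give reciprocity; yours gives an actual proof with explicit formulas.

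Do drop your ``Alternatively'' paragraph. The pieces $R\cap[0,x]^n$ for regions $R$ of the fixed arrangement are not dilates of a single polytope, so neither plain Ehrhart theory nor Theorem~\ref{TBZ} applies to them. It is the same dilation problem.

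Your ``main obstacle'' has a definite answer, and the fix you propose does not work. Put $X=x-r+2$ and $k=r-s\ge 1$. The polynomial $\binom{X}{k}$ equals the number of realizations of the pattern exactly when $X\ge 0$. For $X<0$ the true count is $0$, but $\binom{X}{k}=(-1)^k\binom{k-X-1}{k}\neq 0$. Since $r\le n$, your argument proves that $\Lambda(G,x)=p_G(x)$ for all $x\ge n-2$, with $p_G$ monic of degree $n$. That is the form in which the theorem is true.

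It cannot be extended to all $x\ge 0$: $\Lambda(K_3,0)=0$, while $(x-1)(x-2)(x-3)$ gives $-6$. Your fallback $x\ge\lambda(G)$ fails too. Take $P_7$ and $x=4=\lambda(P_7)$:
\begin{itemize}
\item Only the patterns with $r=7$ have $X<0$. Each contributes $\binom{-1}{7-s}=(-1)^{7-s}$ to $p_{P_7}(4)$ and $0$ to $\Lambda(P_7,4)$.
\item For an ordering in which $e$ consecutive pairs are adjacent in $P_7$, summing over the $6-e$ free gap flags gives $-(1-1)^{6-e}$.
\item So only $v_1v_2\cdots v_7$ and its reverse survive, and $p_{P_7}(4)=\Lambda(P_7,4)-2$.
\end{itemize}
So state the result as agreement with a monic degree-$n$ polynomial for $x\ge n-2$, which your Steps~2--3 prove completely.
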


\begin{proof}
We interpret \( \Lambda(G, x) \) as a lattice point enumerator for an inside-out
polytope \( (P, \mathcal{H}) \) in the sense of Beck and Zaslavsky~\cite{BZ},
and apply Theorem~\ref{TBZ} to conclude polynomiality.
 
Let \( P = [0,1]^n \) be the unit cube in \( \mathbb{R}^n \), with vertices in
\( \mathbb{Z}^n \) and denominator \( \operatorname{den}(P) = 1 \).
A colouring of \( G \) using colours from \( \{0, 1, \dots, x\} \) corresponds
to an integer vector \( \mathbf{c} = (c_1, \dots, c_n) \).
The \( L(2,1) \)-constraints forbid certain differences between coordinates.
Specifically, for each edge \( \{i,j\} \in E \), the values \( |c_i - c_j| \in \{0,1\} \)
are forbidden, giving the hyperplanes
\[
H_{ij}^{(r)} = \bigl\{\mathbf{c} \in \mathbb{R}^n : c_i - c_j = r\bigr\},
\quad r \in \{-1, 0, 1\},
\]
and for each pair \( \{i,k\} \) at distance two in \( G \), the value \( c_i = c_k \)
is forbidden, giving \( H_{ik}^{(0)} = \{c_i = c_k\} \).
The full forbidden arrangement is
\[
\mathcal{H} = \bigl\{H_{ij}^{(r)} : \{i,j\} \in E,\, r \in \{-1,0,1\}\bigr\}
\cup \bigl\{H_{ik}^{(0)} : d_G(i,k) = 2\bigr\}.
\]
Every hyperplane in \( \mathcal{H} \) has integer coefficients and integer
right-hand side, so \( \operatorname{den}(\mathcal{H}) = 1 \), and therefore
\( \operatorname{den}(P,\mathcal{H}) = 1 \).
 
Define the shifted colouring vector \( \mathbf{c}' = \mathbf{c} + \mathbf{1} \),
where \( \mathbf{1} = (1,\dots,1) \). Since each hyperplane in \( \mathcal{H} \)
is defined by a difference \( c_i - c_j = r \), it is invariant under
this shift: \( c_i - c_j = r \) if and only if \( c_i' - c_j' = r \).
The colour set \( \{0, 1, \dots, x\} \) maps to \( \{1, 2, \dots, x+1\} \)
under the shift, so
$$
\Lambda(G, x)
= \#\bigl\{\mathbf{c} \in \{0,\dots,x\}^n : \mathbf{c} \notin H
   \text{ for all } H \in \mathcal{H}\bigr\}$$
$$\hskip 2.2cm = \#\bigl\{\mathbf{c}' \in \{1,\dots,x+1\}^n : \mathbf{c}' \notin H
   \text{ for all } H \in \mathcal{H}\bigr\}.
$$
The set \( \{1, \dots, x+1\}^n \) is precisely the set of integer points
in the open polytope \( (x+2)P^{\circ} = (0, x+2)^n \). Therefore,
\[
\Lambda(G, x) = E^{\circ}_{P^{\circ},\mathcal{H}}(x+2),
\]
where \( E^{\circ}_{P^{\circ},\mathcal{H}}(t) \) denotes the open inside-out
Ehrhart enumerator of \( (P, \mathcal{H}) \).

Equivalently a valid \( L(2,1) \)-colouring corresponds exactly to an integer point in 
\( [0,x+2]^n \) that does not lie on any hyperplane in \( \mathcal{H} \).

Therefore,
\[
\Lambda(G,x) = \#\big( x+2P \cap \mathbb{Z}^n \setminus \bigcup \mathcal{H} \big)
= E^{\circ}_{P,\mathcal{H}}(x+2),
\]
where $x+2P = [0, x+2]^n$ represents the dilation of the unit cube $[0, 1]^n$.

Since \( P = [0,1]^n \) is a closed, full-dimensional, \( \mathbb{Z}^n \)-fractional
inside-out polytope with \( \operatorname{den}(P,\mathcal{H}) = 1 \),
Theorem~\ref{TBZ} implies that \( E^{\circ}_{P^{\circ},\mathcal{H}}(t) \) is a
quasipolynomial in \( t \) with period dividing \( \operatorname{den}(P,\mathcal{H}) = 1 \).
A quasipolynomial of period \( 1 \) is an ordinary polynomial, so
\( E^{\circ}_{P^{\circ},\mathcal{H}}(t) \) is a polynomial in \( t \) of degree
\( n \) with leading coefficient \( \operatorname{vol}_{\mathbb{Z}^n}(P) = 1 \).
 
Since \( \Lambda(G,x) = E^{\circ}_{P^{\circ},\mathcal{H}}(x+2) \) and
\( E^{\circ}_{P^{\circ},\mathcal{H}}(t) \) is a polynomial of degree \( n \) in \( t \)
with leading coefficient \( 1 \), substituting \( t = x+2 \) gives a polynomial
in \( x \) of the same degree \( n \) and the same leading coefficient \( 1 \).

Therefore \( \Lambda(G, x) \) is a monic polynomial in \( x \) of degree \( n \).
\end{proof}
 
\begin{remark}
In the formulation of the \( \lambda \)-chromatic polynomial \( \Lambda(G, x) \), we utilize the open Ehrhart quasipolynomial \( E^{\circ}_{P, \mathcal{H}}(x+2) \). This is because a valid \( L(2,1) \)-coloring must avoid all forbidden configurations, including those where the coloring variables satisfy any of the constraints defining the hyperplanes in \( \mathcal{H} \). The closed count would include lattice points lying on these hyperplanes, thus violating the coloring rules. Therefore, to precisely enumerate the valid colorings, we restrict to the lattice points strictly inside the polytope and away from all hyperplanes, which corresponds exactly to the open inside-out count. In fact, the shift \( \mathbf{c}' = \mathbf{c} + \mathbf{1} \) in the open inside-out enumerator \( E^{\circ}_{P^{\circ},\mathcal{H}}(t) \)
counts lattice points in the \emph{open} polytope \( (0,t)^n = \{1,\dots,t-1\}^n \),
which would exclude the colour \( 0 \) if applied directly at \( t = x+1 \).
The shift maps the colour set \( \{0,\dots,x\} \) to \( \{1,\dots,x+1\} \),
so the dilation parameter is \( t = x+2 \).
This identification has been verified computationally: for \( G = K_n \)
with \( n = 2, 3, 4 \), the values \( E^{\circ}_{P^{\circ},\mathcal{H}}(x+2) \)
match the count of \( L(2,1) \)-colourings exactly.
\end{remark}

Now, we provide following two examples which illustrates Theorem \ref{Poly} on two classes of graphs.

\begin{example}
Consider the path graph \( P_3 \), consisting of three vertices \( v_1, v_2, v_3 \)
and two edges \( \{v_1,v_2\} \) and \( \{v_2,v_3\} \).
 
A colouring of \( P_3 \) using colours from \( \{0,1,\dots,x\} \) is a
vector \( (c_1,c_2,c_3) \in \{0,\dots,x\}^3 \).
The ambient polytope is \( P = [0,1]^3 \).
Its eight vertices are the points of \( \{0,1\}^3 \subset \mathbb{Z}^3 \),
so they already lie in the integer lattice and
\( \operatorname{den}(P) = 1 \).
 
The \( L(2,1) \)-constraints for \( P_3 \) are:
\[
|c_1 - c_2| \geq 2, \qquad |c_2 - c_3| \geq 2
\qquad (\text{edge constraints}),
\]
\[
c_1 \neq c_3 \qquad (\text{distance-two constraint}).
\]
These give the forbidden hyperplane arrangement
\[
\mathcal{H} = \bigl\{
  H_{12}^{-1},\; H_{12}^{0},\; H_{12}^{1},\;
  H_{23}^{-1},\; H_{23}^{0},\; H_{23}^{1},\;
  H_{13}^{0}
\bigr\},
\]
where \( H_{ij}^{(r)} = \{(c_1,c_2,c_3)\in\mathbb{R}^3 : c_i - c_j = r\} \).
Every hyperplane is defined by an equation with integer coefficients and
integer right-hand side, so \( \operatorname{den}(\mathcal{H}) = 1 \),
and therefore
\[
\operatorname{den}(P,\mathcal{H})
= \operatorname{lcm}\!\bigl(\operatorname{den}(P),\,
  \operatorname{den}(\mathcal{H})\bigr)
= \operatorname{lcm}(1,1) = 1.
\]
The valid \( L(2,1) \)-colourings of \( P_3 \) using colours from
\( \{0,1,\dots,x\} \) are the integer points in \( \{0,\dots,x\}^3 \)
that avoid every hyperplane in \( \mathcal{H} \).
Apply the coordinate shift \( c_i' = c_i + 1 \).
Since each hyperplane in \( \mathcal{H} \) involves only a difference
\( c_i - c_j \), it is invariant under this shift:
\( c_i - c_j = r ~\text{if and only if}~ c_i' - c_j' = r \).
The colour set \( \{0,1,\dots,x\} \) maps to \( \{1,2,\dots,x+1\} \),
which is the set of integer points in \( (0,x+2)^3 = (x+2)P^{\circ} \).
Therefore
\[
\Lambda(P_3, x) = E^{\circ}_{P^{\circ},\mathcal{H}}(x+2).
\]
To see explicitly why the shift is necessary, take \( x = 4 \).
The valid colourings of \( P_3 \) with colours from \( \{0,1,2,3,4\} \)
include, for instance, \( (0,2,4) \), \( (0,3,1) \), and \( (4,2,0) \).
In total there are \( 18 \) such colourings, so \( \Lambda(P_3,4) = 18 \).
After shifting, these become \( (1,3,5) \), \( (1,4,2) \), \( (5,3,1) \),
all lying in \( \{1,2,3,4,5\}^3 = (0,6)^3 \cap \mathbb{Z}^3 \),
consistently with \( E^{\circ}_{P^{\circ},\mathcal{H}}(6) = 18 \).
Had we used \( t = x+1 = 5 \) instead, we would obtain
\( E^{\circ}_{P^{\circ},\mathcal{H}}(5) = 4 \neq 18 \), since the
unshifted colour \( 0 \) would have been excluded.
 
Since \( \operatorname{den}(P,\mathcal{H}) = 1 \), Theorem~\ref{TBZ} implies that \( E^{\circ}_{P^{\circ},\mathcal{H}}(t) \)
is a polynomial in \( t \) of degree \( 3 \) with leading coefficient
\( \operatorname{vol}([0,1]^3) = 1 \). Substituting \( t = x+2 \), we
conclude that \( \Lambda(P_3, x) \) is a monic polynomial in \( x \) of
degree \( 3 \).
 
By direct enumeration or by the lattice point count,
\[
\Lambda(P_3, x) = x^3 - 4x^2 + 5x - 2,
\]
which is indeed monic of degree \( 3 = |V(P_3)| \), confirming the Theorem \ref{Poly}.
The values for small \( x \) are recorded in Table~\ref{tab:P3}.
 
\begin{table}[H]
\centering
\caption{Values of \( \Lambda(P_3, x) = x^3 - 4x^2 + 5x - 2 \).}
\label{tab:P3}
\renewcommand{\arraystretch}{1.2}
\begin{tabular}{ccccccc}
\toprule
\( x: \) & \( 2 \) & \( 3 \) & \( 4 \) & \( 5 \) & \( 6 \) & \( 7 \) \\
\midrule
\( \Lambda(P_3,x): \) & \( 0 \) & \( 4 \) & \( 18 \) & \( 48 \) & \( 100 \) & \( 180 \)\\ 
\bottomrule
\end{tabular}
\end{table}
\vskip -2.7cm
Note that \( \Lambda(P_3,2) = 0 \), consistent with the fact that the
\( \lambda \)-number of \( P_3 \) is \( 3 \): no valid colouring exists
when the colour set has fewer than \( 4 \) elements.
The third finite differences of the sequence \( 0, 4, 18, 48, 100, 180,\dots \)
are all equal to \( 6 = 3! \), confirming that the leading coefficient is
\( 1 \), as required.
\end{example}

\begin{example}
Consider the complete graph \( K_3 \) with vertex set \( V = \{v_1, v_2, v_3\} \).
The colour space for all possible \( L(2,1) \)-colourings of the three vertices
using colours from \( \{0, 1, \dots, x\} \) is represented by the cube
\( \{0,\dots,x\}^3 \subset \mathbb{R}^3 \), corresponding to the dilated
unit cube \( (x+2)P \) under the coordinate shift described in Theorem \ref{Poly}.
 
For each edge in \( K_3 \), the \( L(2,1) \)-colouring constraints introduce the
following forbidden hyperplanes in \( \mathbb{R}^3 \):
\begin{itemize}
    \item For each edge \( \{i,j\} \), the hyperplane
          \( H_{(i,j)}^0 = \{ x_i = x_j \} \) prevents adjacent vertices
          from receiving the same colour.
    \item The hyperplanes \( H_{(i,j)}^1 = \{ x_i - x_j = 1 \} \) and
          \( H_{(j,i)}^1 = \{ x_j - x_i = 1 \} \) ensure that adjacent
          vertices do not receive colours differing by exactly one.
\end{itemize}
Since \( K_3 \) is a complete graph, every pair of vertices is at distance one,
and there are no pairs at distance two. Accordingly, no additional hyperplanes
of the form \( \{x_i = x_k\} \) are needed for distance-two constraints.
The full forbidden arrangement \( \mathcal{H} \) therefore consists of nine
hyperplanes: three edges, each contributing three hyperplanes
\( H_{(i,j)}^0 \), \( H_{(i,j)}^1 \), and \( H_{(j,i)}^1 \).
 
The inside-out polytope is visualised in Figure~\ref{Figure_2}.
The cube represents the allowable colour space, while the arrangement of
forbidden hyperplanes partitions it into admissible and inadmissible regions.
 
\begin{figure}[H]
    \centering
    \includegraphics[width=.8\textwidth]{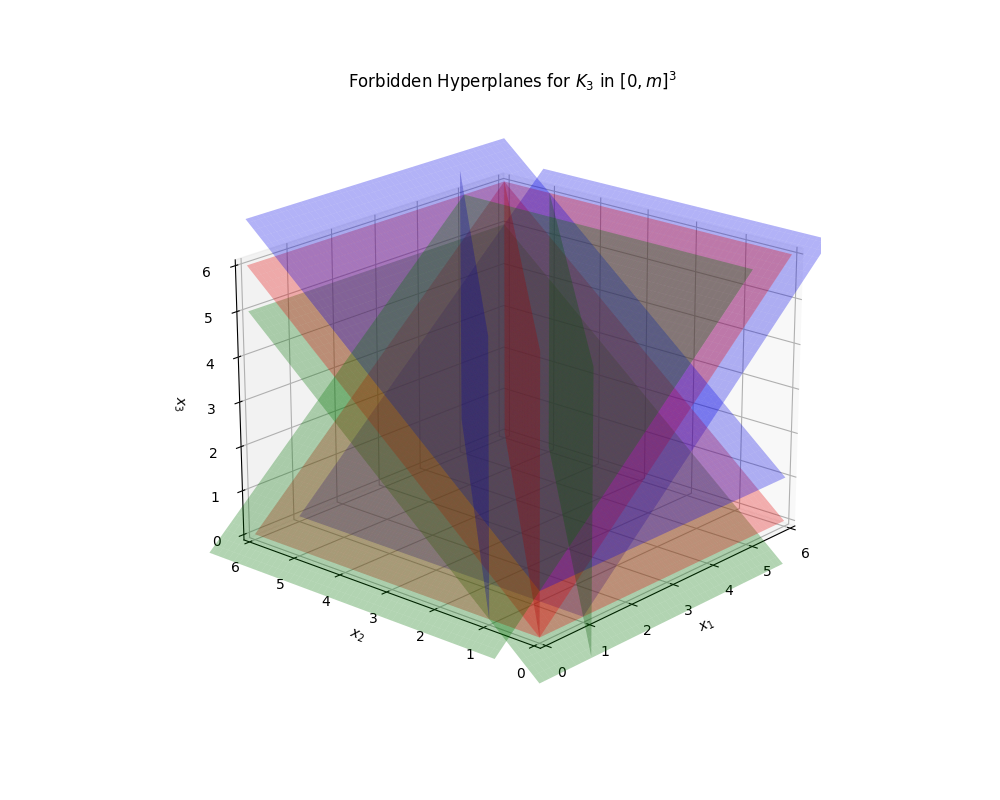}
    \caption{Inside-out polytope for \( K_3 \) showing the forbidden hyperplanes
             corresponding to \( \lambda \)-colouring constraints.}
    \label{Figure_2}
\end{figure}

This diagram illustrates how the hyperplanes slice the polytope into multiple regions and how the admissible integer points (corresponding to proper $\lambda$-colorings) are located strictly within regions that avoid the hyperplanes.
\end{example}
\begin{remark}
In classical combinatorics, Stanley~\cite{S} showed that the chromatic
polynomial \( P(G,x) \) satisfies the reciprocity formula
\[
P(G,-1) = (-1)^n \cdot (\text{number of acyclic orientations of } G),
\]
where \( n = |V(G)| \). For example, \( P(K_3, x) = x(x-1)(x-2) \),
and \( P(K_3,-1) = (-1)(-2)(-3) = -6 = (-1)^3 \cdot 6 \), where \( 6 = 3! \)
is the number of acyclic orientations of \( K_3 \).
 
In the setting of \( L(2,1) \)-colouring, evaluating \( \Lambda(G,x) \) at
\( x = -1 \) yields a different, and more subtle, result.
From the identification \( \Lambda(G,x) = E^{\circ}_{P^{\circ},\mathcal{H}}(x+2) \)
and the Beck--Zaslavsky reciprocity theorem \cite{BZ}, which states
\( E^{\circ}_{P^{\circ},\mathcal{H}}(t) = (-1)^n E_{P,\mathcal{H}}(-t) \),
we obtain
\[
\Lambda(G,x) = (-1)^n E_{P,\mathcal{H}}(-x-2).
\]
Evaluating at \( x = -1 \) gives
\[
\Lambda(G,-1) = (-1)^n E_{P,\mathcal{H}}(-1),
\]
where \( E_{P,\mathcal{H}}(t) \) is the closed inside-out Ehrhart polynomial
of the pair \( (P,\mathcal{H}) \). By Beck and Zaslavsky's theorem, the region count is encoded
in the constant term \( E_{P,\mathcal{H}}(0) \), which corresponds to
\( x = -2 \) rather than \( x = -1 \).
 
For the graph \( K_3 \) and the unit cube \( P = [0,1]^3 \),
as illustrated in Figure~\ref{Figure_2},
the forbidden arrangement \( \mathcal{H} \) for \( K_3 \) consists of nine
hyperplanes: for each of the three edges \( \{i,j\} \), the hyperplanes
\( H_{ij}^{(r)} = \{c_i - c_j = r\} \) for \( r \in \{-1, 0, 1\} \).
Of these nine hyperplanes, only the three given by \( r = 0 \), namely
\[
\{c_1 = c_2\}, \quad \{c_1 = c_3\}, \quad \{c_2 = c_3\},
\]
intersect the interior of \( [0,1]^3 \). The remaining six hyperplanes,
defined by \( c_i - c_j = \pm 1 \), satisfy \( |c_i - c_j| = 1 \) only
when one coordinate equals \( 0 \) and the other equals \( 1 \); this
forces both points to lie on the boundary of \( [0,1]^3 \), so
these six hyperplanes do not cut the interior of the cube. This is
illustrated in Figure~\ref{fig:hyperplane-k3}.
 
The three effective hyperplanes \( \{c_i = c_j\} \) partition the interior
of \( [0,1]^3 \) into the six open regions corresponding to the six strict
orderings of \( (c_1, c_2, c_3) \): $c_1 < c_2 < c_3, \quad c_1 < c_3 < c_2, \quad c_2 < c_1 < c_3,
\quad c_2 < c_3 < c_1, \quad c_3 < c_1 < c_2, \quad c_3 < c_2 < c_1$.

Therefore the arrangement \( \{c_1 = c_2,\, c_1 = c_3,\, c_2 = c_3\} \)
produces exactly \(6 \) open regions inside \( [0,1]^3 \),
as illustrated in Figure~\ref{fig:hyperplane-k3}.
 
\begin{figure}[H]
    \centering
    \includegraphics[width=0.6\textwidth]{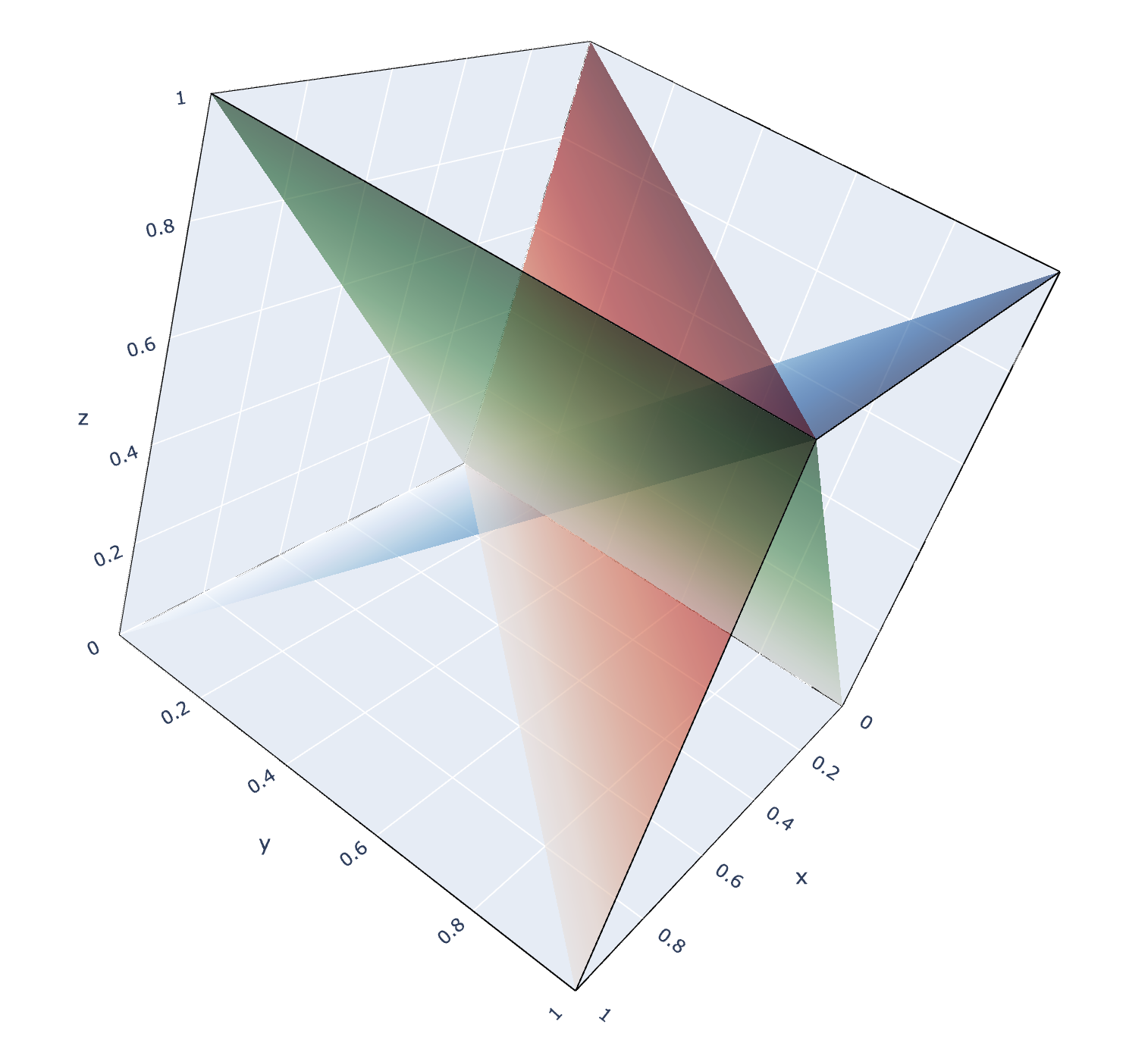}
    \caption{Hyperplane arrangement in \( [0,1]^3 \) for \( K_3 \), showing the
             three effective hyperplanes \( \{c_i = c_j\} \) and the resulting
             partition into \(6 \) open regions, each corresponding to
             a strict ordering of \( (c_1, c_2, c_3) \).}
    \label{fig:hyperplane-k3}
\end{figure}

From Theorem~\ref{LKN}, the \( \lambda \)-chromatic polynomial of \( K_3 \) is
\( \Lambda(K_3, x) = (x-1)(x-2)(x-3) \).
Evaluating at \( x = -1 \):
\[
\Lambda(K_3,-1) = (-2)(-3)(-4) = -24 = (-1)^3 \cdot 24.
\]
More generally, for \( K_n \) one computes
\[
\Lambda(K_n,-1) = (-1)^n \cdot \frac{(2n-2)!}{(n-2)!}.
\]
Unlike the classical case, where \( (-1)^n P(G,-1) \) counts acyclic
orientations, the quantity \( (-1)^n \Lambda(K_n,-1) = (2n-2)!/(n-2)! \)
does not admit an obvious analogous combinatorial interpretation in terms of
orientations or regions of the arrangement, and identifying one is an
interesting open problem.
\end{remark}

An $L(h,k)$-coloring of a graph \( G = (V, E) \), where \( h \) and \( k \) are non-negative integers, is an assignment of non-negative integers (colors) to the vertices of \( G \) such that adjacent vertices receive colors that differ by at least \( h \), and vertices at distance two receive colors that differ by at least \( k \). The classical $L(2,1)$-coloring corresponds to the special case when \( h = 2 \) and \( k = 1 \).

We define the \( L(h,k) \)-chromatic function \( \Lambda_{h,k}(G, x) \) to be the number of distinct \( L(h,k) \)-colorings of \( G \) using colors from the set \( \{0, 1, \dots, x\} \).

The inside-out polytope model naturally extends to the $L(h,k)$-coloring by generalizing the forbidden hyperplanes to encode the distance constraints: for adjacent vertices \( u, v \), the hyperplanes \( x_u - x_v = t \) and \( x_v - x_u = t \) for all \( t \in \{0, 1, 2, \dots, h-1\} \) are forbidden, and for vertices at distance two, the hyperplanes \( x_u - x_v = t \) and \( x_v - x_u = t \) for all \( t \in \{0, 1, 2, \dots, k-1\} \) are excluded.

\begin{remark}
\label{rem:lhk}
The argument discussed in Theorem \ref{Poly} applies to the \( L(h,k) \)-chromatic function
\( \Lambda_{h,k}(G,x) \) for any non-negative integers \( h \) and \( k \).
The forbidden hyperplanes become
\( H_{ij}^{(r)} = \{c_i - c_j = r\} \) for \( r \in \{-(h-1),\dots,h-1\} \)
at adjacent pairs, and
\( H_{ik}^{(r)} = \{c_i - c_k = r\} \) for \( r \in \{-(k-1),\dots,k-1\} \)
at distance-two pairs.
All hyperplanes still have integer coefficients, so
\( \operatorname{den}(P,\mathcal{H}) = 1 \) and Theorem~\ref{TBZ} applies
unchanged. The identification \( \Lambda_{h,k}(G,x) = E^{\circ}_{P^{\circ},\mathcal{H}}(x+2) \)
holds by the same shift argument, and \( \Lambda_{h,k}(G,x) \) is therefore a
monic polynomial in \( x \) of degree \( n \).
\end{remark}

\begin{Corollary}
For any finite graph \( G \), the \( L(h,k) \)-chromatic function \( \Lambda_{h,k}(G, x) \) is a monic polynomial in \( x \) of degree \( n \).
\end{Corollary}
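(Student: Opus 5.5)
The plan is a direct count. Partition the set of $L(h,k)$-colourings $c\colon V\to\{0,\dots,x\}$ into finitely many \emph{patterns}, show that each pattern is counted by a binomial coefficient in $x$, and read off the leading term. I would use this in preference to simply repeating the inside-out argument of Theorem~\ref{Poly} with the widened arrangement of Remark~\ref{rem:lhk}, because the direct count also makes clear for which $x$ the polynomial identity holds.

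Set $D=\max(h,k,1)$. To a colouring $c$ attach the following data:
\begin{itemize}
\item the ordered set partition $(B_1,\dots,B_j)$ of $V$ into level sets of $c$, listed by increasing colour;
\item for each $1\le s<j$, the capped gap $\delta_s=\min\bigl(c(B_{s+1})-c(B_s),\,D\bigr)\in\{1,\dots,D\}$.
\end{itemize}
Whether $c$ is an $L(h,k)$-colouring depends only on this pattern, since $D\ge h,k$. Indeed, $|c(u)-c(v)|$ is either $0$ (same block), or a sum of consecutive gaps; such a sum is determined exactly when every gap in it is $<D$, and is otherwise known to be $\ge D$. So the admissible patterns form a finite set $\Pi$ independent of $x$.

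Fix an admissible pattern with $j$ blocks, and let $J$ be the set of indices $s$ with $\delta_s=D$. The colourings with this pattern are determined by $c(B_1)\ge 0$, by the excesses $c(B_{s+1})-c(B_s)-D\ge 0$ for $s\in J$, and by the top slack $x-c(B_j)\ge 0$. These $|J|+2$ nonnegative integers sum to $x-\sum_s\delta_s$. By stars and bars the number of such colourings is
\[
\binom{x-\sum_s\delta_s+|J|+1}{|J|+1},
\]
which is a polynomial in $x$ of degree $|J|+1\le j\le n$ whenever $x\ge\sum_s\delta_s$. Summing over $\Pi$ shows that $\Lambda_{h,k}(G,x)$ agrees with a polynomial for all $x\ge (n-1)D$.

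For the leading coefficient, note that degree $n$ is attained only when $j=n$ and $J$ contains all $n-1$ gaps. Such patterns are the $n!$ strict orderings of $V$ with every consecutive gap $\ge D$, and all of them are admissible. Each one contributes $\binom{x-(n-1)D+n}{n}=x^n/n!+O(x^{n-1})$. Every other pattern contributes degree $<n$, so the total is $x^n+O(x^{n-1})$, i.e.\ monic of degree $n$. As a consistency check, for $h=2$, $k=1$ and $G=K_n$ this recovers Theorem~\ref{LKN}.

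The main obstacle is the range of validity. The count equals the polynomial only for $x$ beyond a threshold, because for small $x$ the binomial expressions, read as polynomials, need not vanish. Theorem~\ref{LKN} already shows this: $(x-n+2)\cdots(x-2n+3)$ is nonzero for $x<n-2$, while the true count there is $0$. I would therefore state the corollary as ``$\Lambda_{h,k}(G,x)$ coincides with a monic degree-$n$ polynomial for all $x\ge (n-1)\max(h,k,1)$'', and prove the threshold by the stars-and-bars condition above. The same caveat applies implicitly to the Ehrhart route, where it is hidden in the identification with $E^{\circ}_{P^{\circ},\mathcal H}(x+2)$. Two degenerate cases need a remark. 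If $h=k=0$, there are no constraints and the count is $(x+1)^n$. If $h=0$, then $D$ is still at least $1$, so the construction and the argument go through unchanged.
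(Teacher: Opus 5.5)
Your proof is correct, and it takes a genuinely different route from the paper.

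The paper proves the corollary by rerunning Theorem~\ref{Poly}. It enlarges $\mathcal H$ to the hyperplanes $c_i-c_j=r$ with $|r|<h$ on edges and $|r|<k$ on distance-two pairs, notes that $\operatorname{den}(P,\mathcal H)=1$, and invokes Theorem~\ref{TBZ} through the identification $\Lambda_{h,k}(G,x)=E^{\circ}_{P^{\circ},\mathcal H}(x+2)$.

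You instead stratify colourings by the ordered level-set partition together with gaps capped at $D=\max(h,k,1)$. You check that admissibility is constant on each stratum and count each stratum by stars and bars. This buys you several things:
\begin{itemize}
\item an explicit expansion of $\Lambda_{h,k}(G,x)$ as a finite sum of binomial coefficients;
\item an explicit range $x\ge(n-1)D$ on which the polynomial identity holds;
\item a transparent reason for monicity, namely $n!$ top strata, each contributing $x^n/n!$;
\item Theorem~\ref{LKN} as a special case.
\end{itemize}

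The range restriction is genuinely needed. For example, $\Lambda(K_3,0)=0$, whereas $(x-1)(x-2)(x-3)$ gives $-6$, so no polynomial agrees with the count on all of $\mathbb{Z}_{\ge 0}$.

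The paper's route is shorter and, if it applied, would also deliver a reciprocity law. However, your remark that the caveat is merely hidden in the Ehrhart identification is too generous. Beck and Zaslavsky's $E^{\circ}_{P^{\circ},\mathcal H}(t)$ counts points of $t^{-1}\mathbb{Z}^n$ in $P^{\circ}\setminus\bigcup\mathcal H$. Equivalently, it counts integer points of $(0,t)^n$ off the \emph{dilated} arrangement $t\mathcal H$. So each hyperplane with $r\neq 0$ becomes $c_i-c_j=rt$ and never meets $(0,t)^n$. For $K_2$ with $(h,k)=(2,1)$ this gives $E^{\circ}_{P^{\circ},\mathcal H}(x+2)=x(x+1)$ instead of $\Lambda(K_2,x)=x(x-1)$.

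Counting against a fixed, non-dilating affine arrangement is exactly what your decomposition handles. Your argument is therefore the one that actually proves the statement, in its correctly restricted form.
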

\begin{proof}
The proof follows the same structure as the proof of the polynomiality of \( \Lambda(G, x) \) in the \( L(2,1) \)-case. The difference lies in the set of forbidden hyperplanes, which now encode the generalized distance constraints corresponding to \( h \) and \( k \). Since the polytope and all hyperplanes involved have integer coefficients, the counting function remains a polynomial by Beck and Zaslavsky's theorem.
\end{proof}

\section*{Conclusion}
In this article, we investigated the concept of the $\lambda$-chromatic polynomial in a graph, a natural extension of the classical chromatic polynomial tailored to the $L(2,1)$-colouring framework. By establishing a connection with lattice point enumeration in inside-out polytopes, we demonstrated that the $\lambda$-chromatic function is a polynomial whose degree equals the order of the graph and whose leading coefficient is one. The combinatorial structure of this polynomial was explored using both Ehrhart's theory and lattice path enumeration. Furthermore, we derived explicit expressions for the $\lambda$-chromatic polynomials of key graph families such as complete graphs, complete bipartite graphs, and complete multipartite graphs. The inside-out polytope framework provided a rigorous method for systematically excluding forbidden colourings, and the Ehrhart's reciprocity theorem suggested deep combinatorial dualities that warrant further investigation. This study opens several avenues for future work, including the exploration of $\lambda$-chromatic reciprocity in relation to graph orientations and potential extensions to more general $L(h,k)$-colourings. \\
\paragraph{Future Directions.}
A promising avenue for future research is the development of a recursive formula for the $\lambda$-chromatic polynomial, analogous to the classical deletion-contraction relation for chromatic polynomials. Such a recursive structure could provide efficient computational techniques and deeper combinatorial insights into $\lambda$-colourings. Additionally, exploring the combinatorial interpretation of evaluating the $\lambda$-chromatic polynomial at negative integers may reveal further structural properties of graphs under distance-based colouring constraints.

\section*{Declarations}

\subsection*{Competing interests}
The authors declare that there are no financial interests or personal relationships that could have appeared to influence the work discussed in the manuscript.

\subsection*{Funding}
The first author's research is supported by the University Grants Commission, Govt. of India under UGC-Ref. No. 191620023547, and the second author's research work is funded by the Department of Atomic Energy, Govt. of India under S.No. [02011/15/2023NBHM(R.P)/R\&D II/5866].

\subsection*{Availability of data and materials}
Data sharing is not applicable to this article as no datasets were generated or analysed during the current study.

\end{document}